\def\qed{\relax\ifmmode\hskip2em \Box\else\nobreak\hskip1em $\Box$\fi}
\theoremstyle{plain}
\theoremstyle{definition}
\newtheorem{teo}{Theorem}[section]
\newtheorem{pro}[teo]{Proposition}
\newtheorem{lem}[teo]{Lemma}
\newtheorem{cor}[teo]{Corollary}
\newtheorem{den}[teo]{Definition}
\newtheorem{obs}[teo]{Remark}
\newtheorem{prp}[teo]{Propiedad}
\newtheorem{eje}[teo]{Example}
\newtheorem{prps}[teo]{Propiedades}
\newtheorem{obss}[teo]{Remarks}
\newtheorem{ejes}[teo]{Examples}
\newenvironment{teor}{\smallskip\begin{teo}}{\end{teo}\smallskip}
\newenvironment{prop}{\smallskip\begin{pro}}{\end{pro}\smallskip}
\newenvironment{lema}{\smallskip\begin{lem}}{\end{lem}\smallskip}
\newenvironment{defi}{\smallskip\begin{den}}{\end{den}\smallskip}
\newenvironment{obse}{\smallskip\begin{obs}}{\end{obs}\smallskip}
\newenvironment{ejem}{\smallskip\begin{eje}}{\end{eje}\smallskip}
\newenvironment{obses}{\smallskip\begin{obss}}{\end{obss}\smallskip}
\def\a{\alpha}
\def\b{\beta}
\def\ep{\varepsilon}
\def\eps{\varepsilon}
\def\balpha{\boldsymbol{\alpha}}
\def\bbeta{\boldsymbol{\beta}}
\def\bdelta{\boldsymbol{\delta}}
\def\boeta{\boldsymbol{\eta}}
\def\btheta{\boldsymbol{\theta}}
\def\blambda{\boldsymbol{\lambda}}
\def\bomega{\boldsymbol{\omega}}
\def\brho{\boldsymbol{\rho}}
\def\bA{\boldsymbol{A}}
\def\ba{\boldsymbol{a}}
\def\be{\boldsymbol{e}}
\def\bH{\boldsymbol{H}}
\def\bN{\boldsymbol{N}}
\def\bt{\boldsymbol{t}}
\def\bz{\boldsymbol{z}}
\def\bzeta{\boldsymbol{z}}
\def\bw{\boldsymbol{w}}
\def\bB{\boldsymbol{B}}
\def\bR{\boldsymbol{R}}
\def\bk{\boldsymbol{k}}
\newcommand{\FF}{{\mathcal F}}
\newcommand{\OO}{{\mathcal O}}
\newcommand{\CC}{{\mathcal C}}
\def\K{\hbox{\raise2pt\hbox{$\chi$}}}   
\newcommand{\norm}[1]{\left | #1 \right |}
\newcommand{\norma}[1]{\left |\left| #1 \right|\right |}
\DeclareMathOperator{\real}{Re}
\renewcommand{\Re}{\real}
\def\n{\mathbb{N}}      
\def\N{\mathbb{N}}
\def\R{\mathbb{R}}
\def\C{\mathbb{C}}      
\def\c{\mathbb{C}}
\def\adh#1 {\overline{\hbox{$#1$}}}        
\def\inter#1 {\buildrel {\,\circ} \over {#1}}  
\def\fra #1 #2
\def\frs #1 #2
\def\frb #1 #2 {{\des\mathstrut {#1}\over\des\mathstrut {#2}}}
\def\raiz #1 #2 {\root {\raise.3em\hbox{$\sues #1$}} \of {#2}}
\def\listados#1 #2 {{#1}_1,{#1}_2,\ldots,{#1}_{#2}}  
\def\vec#1 #2 {({#1}_1,{#1}_2,\ldots,{#1}_{#2})}  
\def\s#1 #2 {\{{#1}_{#2}\}_{{#2}=1}^\infty}         
\def\suc #1 #2 #3 {\{#1\}_{#2=#3}^\infty}   
\def\F#1 #2 #3 {\boldsymbol{#1}(x,y)=\left({#2},{#3}\right)}  
\def\Fe#1 #2 #3 #4 {\boldsymbol{#1}(x,y,z)=\left({#2},{#3},{#4}\right)}  
\def\nor#1 {\left\Vert {#1}\right\Vert}   
\def\ds{\displaystyle}
\def\des{\displaystyle}
\def\sst{\scriptstyle}
\def\sues{\scriptstyle}
\def\linea{\mbox{}\par\nopagebreak}
\newcommand{\sase}{strong asymptotic expansion}
\newcommand{\ase}{asymptotic expansion}
\newcommand{\ases}{asymptotic expansions}
\newcommand{\sases}{strong asymptotic expansions}
\newcommand{\PL}{Phragm\'{e}n-Lindel\"{o}f}
\def\@@and{ and }
\renewcommand{\andify}{%
\nxandlist{\unskip, }{\unskip{} \@@and~}{\unskip \@@and~}}
\def\and{\unskip{ }\@@and{ }\ignorespaces}
\begin{document}

\title{Strong asymptotic expansions in a multidirection}
\author{Alberto Lastra}
\address[Alberto Lastra]{Dpto. An\'{a}lisis Matem\'{a}tico y Did\'{a}ctica de la Matem\'{a}tica \\
Facultad de Ciencias \\
Universidad de Valladolid\\
Prado de la Magdalena, s/n\\
47005 Valladolid\\
Spain}
\email{alastra@am.uva.es}

\author{Jorge Mozo-Fern\'{a}ndez}
\address[Jorge Mozo Fern\'{a}ndez]{Dpto. Matem\'{a}tica Aplicada \\
Facultad de Ciencias \\
Universidad de Valladolid\\
Prado de la Magdalena, s/n\\
47005 Valladolid\\
Spain}
\email{jmozo@maf.uva.es}
\thanks{First and third author partially supported by Ministerio de Ciencia e Innovaci\'{o}n (Spain) under Project MTM2009-12561\\
Second author partially supported by Ministerio de Ciencia e Innovaci\'{o}n (Spain) under Project MTM2010-15471.\\
First author partially supported by Ministerio de Educaci\'{o}n (Spain): Programa
Nacional de Movilidad de Recursos Humanos, Plan Nacional de I-D+i 2008-2011.
}

\author{Javier Sanz}
\address[Javier Sanz]{Dpto. An\'{a}lisis Matem\'{a}tico y Did\'{a}ctica de la Matem\'{a}tica \\
Facultad de Ciencias \\
Universidad de Valladolid\\
Prado de la Magdalena, s/n\\
47005 Valladolid\\
Spain}
\email{jsanzg@am.uva.es}
\date{\today}
\begin{abstract}

In this paper we prove that, for asymptotically bounded holomorphic
functions defined in a polysector in ${\mathbb C}^n$, the existence of
a strong asymptotic expansion in Majima's sense following a single
multidirection towards the vertex entails (global) asymptotic expansion
in the whole polysector. Moreover, we specialize this result for
Gevrey strong asymptotic expansions. This is a generalization of a
result proved by A. Fruchard and C. Zhang for asymptotic expansions in
one variable, but the proof, mainly in the Gevrey case, involves
different techniques of a functional-analytic nature.

\vspace{0.5cm}

keywords: Strong asymptotic expansions; Gevrey asymptotics;
Laplace transform; Banach spaces analytic functions 

\vspace{0.5cm}

MSC: 41A60; 41A63; 40C10; 46E15

\end{abstract}





\maketitle
%
%

\section{Introduction}

In 1886 H. Poincar\'e put forward the concept of asymptotic expansion for holomorphic complex functions $f$ defined in an open sector $S\subseteq\C$ with vertex at 0, by associating to $f$ a formal power series $\hat{f}=\sum_{n\ge 0}a_nz^n\in\C[[z]]$ whose partial sums $\hat{f}_{N}=\sum_{n\le N-1}a_nz^n$ suitably approximate $f$ on every subsector $T$ of $S$. The most important instance consists of the so-called Gevrey asymptotic expansions of a positive order $k$, in which case $f-\hat{f}_{N}$ is bounded in $T$ by $CA^{N}N!^k|z|^{N}$ for positive constants $C,A$ depending only on $T$. The infimum of the constants $A>0$ allowed in the previous expression is known as the Gevrey type of the asymptotic expansion. Gevrey asymptotics incessantly appear in the theory of algebraic ordinary differential equations and of meromorphic systems of differential equations at an irregular singular point. In this context, summability theory has received increasing attention from many authors such as J.-P. Ramis, W. Balser, B.L.J. Braaksma, B. Malgrange, Y. Sibuya and others, since the seventies (\cite{Balser} is a standard reference).

In 1999, A. Fruchard and C. Zhang (Theorem 1 in \cite{Fruzha}) proved that, for a holomorphic function, defined in a sector $S$ and bounded in every subsector of $S$, it is sufficient for the asymptotic expansion to exist that the approximation occurs just in  a direction in $S$, i.e. for the elements in $S$ having a fixed argument. So, asymptotic expansion following a direction entails (classical, global) asymptotic expansion. Moreover, if the asymptotic expansion following direction $\theta$ is of $k$-Gevrey order and fixed type, the  asymptotic expansion remains $k$-Gevrey in $S$, and the type following any other direction may also be determined (for more details we refer to~\cite{Fruzha}).
The proofs of these two results rest, on one hand, on \PL-like results and, on the other hand, on the Borel-Ritt and Borel-Ritt-Gevrey theorems, respectively. Our main aim in this work is to give a generalization of this result to the several variables setting (Theorem~\ref{teorfinal}).

R. G\'erard and Y. Sibuya~\cite{gersib} extended in 1979 the concept of asymptotic expansion to the case of holomorphic functions of several variables defined in polysectors (products of sectors), but their definition turned out not to be stable under derivation, unlike Poincar\'e's one (see~\cite{hersanz} for a counterexample due to J. A. Hern\'andez, F. L\'opez and S. P\'erez-Cacho). In 1983 H. Majima~\cite{Majima,Majima2} defined the so-called strong asymptotic expansions, a technical and difficult-to-handle notion for which, however, the following equivalence was found~\cite{jesusisla}: a function $f$ admits strong asymptotic expansion in a polysector $S$ if, and only if, the function and all its derivatives are asymptotically bounded, i.e., bounded in each bounded subpolysector of $S$. Hence, this concept overcomes the lack of G\'erard-Sibuya's definition, and it is considered to be the good generalization to several variables of the classical concept. The previous equivalence remains valid in the Gevrey case, introducing appropriate bounds for the derivatives, see~\cite{Haraoka}. The asymptotic information for a function $f$ is carried in this case by a coherent, so-called total family of functions, denoted by $\mathrm{TA}(f)$ (see Definitions~\ref{defsase} and \ref{deficohe}), whose elements can be obtained, as it happens in the one variable case with the $a_n$ above, as limits of the derivatives of $f$ with respect to some of the variables when they tend to zero (see~(\ref{familiatotal})). So, interpolation results such as Borel-Ritt and Borel-Ritt-Gevrey theorems in this context should start from families such as $\mathrm{TA}(f)$ instead of just series $\sum_{\bN\in\N_0^n}f_{\bN}\bz^{\bN}$ in $n>1$ variables (whose coefficients are just a part of the total family, see Remark~\ref{obseFA}). This makes a big difference between the situations in one or several variables, and causes many of the intricacies coming next, as we will try to explain henceforth.

The paper is organized as follows:

Section 2 is devoted to fix some notation that will be used throughout the paper. In
Section 3 we establish some preliminaries on general and Gevrey strong asymptotic expansions, both in a global sense and following a product of directions (a \emph{multidirection}, in this paper). In a multidirection, null strong asymptotic expansion and exponential decreasing are proved to be equivalent, see Proposition~\ref{propoexpoplana}. Also, the main result of Fruchard and Zhang is presented (Theorem~\ref{FZ1var}).
Section 4 is devoted to recalling or stating several preparatory lemmas, specially a \PL\ theorem in several variables.

Section 5 contains the main results of the paper. From Lemma~\ref{cotaeninfinito} to Remark~\ref{59} we prove that null strong asymptotic expansion in a multidirection amounts to global null asymptotic expansion, both in the general and in the Gevrey cases. An important part of the paper is based in Lemma \ref{asquevv}, that is not an obvious generalization of the corresponding result in one variable, Lemma 2 in \cite{Fruzha}. The need to control the size of the constants involved, in order to make induction on the number of variables, forced us to develop a completely different proof, of a prominently technical nature, for a refined and subtle version of Lemma 2 in \cite{Fruzha}, appearing here as Lemma~\ref{asqueroso}. Regarding the general multidimensional version of our main result, Theorem~\ref{teorfinal} part (i), one can draw an exact analogy with Theorem 1 in~\cite{Fruzha}, since all is needed is a Borel-Ritt theorem for strong asymptotics which interpolates total families, like the ones in~\cite{jesusisla,javifelix}.

However, in the Gevrey case things become more complicated, as one needs to take care of the types.
In the one variable case, for a function $f$ with 1-Gevrey \ase\ $\hat{f}$ in a direction with type $R>0$ one can readily associate, by means of the truncated Laplace transform of the 1-Borel transform of $\hat{f}$, another function with the same \ase\ in a sector with opening $\pi$ and whose type may be explicitly determined in every direction in the sector. This changes drastically in the several variables situation, since the same procedure will provide a function with the same formal series of \sase, but with a possibly different total family. This is explained from Definition~\ref{defiserie1GtipoR} to Theorem~\ref{BRG}, where, following an idea of Haraoka~\cite{Haraoka}, we use the truncated Laplace transform to solve a multidimensional Borel-Ritt-Gevrey interpolation problem for series of \sase, while making precise estimations on the type in every multidirection. This method works equally well for series with coefficients in a complex Banach space, but fails if they are allowed to belong to a Fr\'echet space, since the very concept of Gevrey series will be no longer clear in this general case. So, our next objective will be to settle the problem of interpolating total families in a Banach space framework.

Precise interpolation results (i.e.\ with estimations for the type in every multidirection) for strong asymptotic expansions are not available neither in the works of H. Majima, Y. Haraoka and others, nor in the papers on closely related subjects that exist in the literature, especially dealing with extension results for ultradifferentiable or ultraholomorphic classes (see, for example, \cite{chaucho,LastraSanz,petzsche,javi2,schmets,Thilliez1,thilliez}). In order to remedy this lack, we note first that, following \cite{Haraoka,jesusisla}, the derivatives of functions with $\boldsymbol{1}$-Gevrey \sase\ are suitably bounded, and this fact permits the introduction of a Banach structure in the space of such functions, see Definition~\ref{defW1RSE}, where the type is allowed to depend on the multidirection (a similar idea was already applied in  \cite{javi1,javi2}, but there the type was assumed to be constant). Secondly, we replace $\textrm{TA}(f)$
by its first order subfamily, $\textrm{TA}_1(f)$, as indicated in Definition~\ref{deffamiliaprimerorden}.
Before obtaining a Borel-Ritt-Gevrey result in this context (Lemma~\ref{521}), we need to make a study of the growth of the derivatives of the function built in Theorem~\ref{BRG} in order to properly identify the Banach space to which the function belongs (see Proposition~\ref{transLaplaW1Rtilde}). As we see in this result, there is a price to pay, concerning the type of the Gevrey asymptotic expansion: The precise knowledge of the exact type of the asymptotic expansion does not allow one to know exact bounds for the derivatives. This does not represent a serious inconvenient in most of the applications, where the precise type is of moderate interest, but will cause technical difficulties in the present paper. Mainly, there is a loss of precision that in particular forced us to control the type by a term of order $\cos (\theta -\theta_0)^2$, instead of $\cos (\theta -\theta_0)$, as would be desirable. The existence of better interpolation results would make this proof easier, and the bounds involved, less complicated.
The second part of Theorem~\ref{teorfinal} is the $\boldsymbol{1}$-Gevrey multidimensional version of Fruchard-Zhang's result. We depart from a function $f$ admitting $\boldsymbol{1}$-Gevrey \sase\ following a multidirection $\btheta_0$ with a type $\bR_0(\btheta_0)$, and with an associated first order family whose elements belong to suitable Banach spaces with types $R_j$, $j=1,\ldots,n$. Then, we prove that $f$ admits $\boldsymbol{1}$-Gevrey \sase\ in the whole polysector with a type in every multidirection $\btheta$ which depends on $\bR_0(\btheta_0)$, the $R_j$ and the function $g$ in Proposition~\ref{transLaplaW1Rtilde}, which was related to our loss of precision.

%

Finally, we point out that related results could possibly be obtained for other types of asymptotic expansions available in the literature, for instance, in other ultraholomorphic classes of functions. Nevertheless, the absence of appropriate integral expressions for the corresponding interpolating operators in this context makes very difficult and technical its treatment, as can be seen in the works of J. Chaumat, A.-M. Chollet~\cite{chaucho} and V. Thilliez~\cite{thilliez}. Monomial asymptotic expansions developed by M. Canalis-Durand, J. Mozo and R. Sch\"{a}fke  in \cite{CDMS}, in order to treat singularly perturbed problems, are another class of asymptotic expansions where similar results could undoubtedly be obtained.

\section{Notations}

We set $\N:=\{1,2,\ldots\}$ and $\N_{0}:=\N\cup\{0\}$. Let $\balpha=(\alpha_1,\alpha_2,\ldots,\alpha_n)$,
$\bbeta=(\beta_1,\beta_2,\ldots,\beta_n)\in\N_0^{n}$ be two multiindices,
$m\in[0,\infty)$, $\bt=(t_1,t_2,\ldots,t_n)\in\R^n$ and $\bzeta=(z_1,z_2,\ldots,z_n)\in\c^n$. We set
\begin{align*}
&\balpha+\bbeta=(\a_{1}+\b_{1},\a_{2}+\b_{2},\ldots,\a_{n}+\b_{n}),
&&|\balpha|=\a_{1}+\a_{2}+\ldots+\a_{n},\\
&\balpha\le\bbeta\Leftrightarrow\a_{j}\le\b_{j}\hbox{ for every }j,
&&\balpha<\bbeta\Leftrightarrow\a_{j}<\b_{j}\hbox{ for every }j,\\
&\boldsymbol{1}=(1,1,\ldots,1),
&&\boldsymbol{z}^{\balpha}=z_{1}^{\a_{1}}z_{2}^{\a_{2}}\cdots z_{n}^{\a_{n}},\\
&|\boldsymbol{z}|^{\boldsymbol{t}}=
|z_{1}|^{t_{1}}|z_{2}|^{t_{2}}\ldots|z_{n}|^{t_{n}},
&&D^{\balpha}=\frac{\partial^{\balpha}}{\partial\boldsymbol{z}^{\balpha}}=
\frac{\partial^{|\balpha|}}{\partial z_{1}^{\a_{1}}\partial z_{2}^{\a_{2}}\ldots\partial z_{n}^{\a_{n}}},\\
&m\boldsymbol{t}=(mt_{1},mt_{2},\ldots,mt_{n}),
&&m^{\balpha}=m^{|\balpha|},\\
&\balpha!=\a_{1}!\a_{2}!\cdots\a_{n}!,
&&\be_j=(0,\ldots,\stackrel{j)}{1},\ldots,0),\\
&\arg(\bzeta)=(\arg(z_1),\ldots,\arg(z_n)),
&&\cos(\bt)=(\cos(t_1),\ldots,\cos(t_n)).
\end{align*}
A sector in $\C$ will be a set
$$ S=S(\a,\b;\rho)=\{z\in\C:0<|z|<\rho,\hbox{arg}(z)\in(\a,\b)\},$$
where $\a,\b\in\R$, $\a<\b$, and $\rho\in(0,\infty]$.
\par
We say a sector $T$ is a (bounded and proper) subsector of $S$, and write $T\prec S$, whenever $T$ is bounded and $\overline{T}\setminus\{0\}\subseteq S$.\par
A polysector $S=\prod_{j=1}^{n}S_j\subseteq \C^n$ is a cartesian product of sectors.
We say $T=\prod_{j=1}^{n}T_{j}$ is a (bounded and proper) subpolysector of $S$, and write $T\prec S$, if $T_{j}\prec S_j$ for $j=1,\ldots,n$.\par
Given $\balpha=(\a_{1},\ldots,\a_n),\,\bbeta=(\b_1,\ldots,\b_n)\in\R^{n}$, with $\a_j<\b_j$ for every $j$, and $\brho=(\rho_1,\ldots,\rho_n)\in(0,\infty]^n$, $S(\balpha,\bbeta;\brho)$ denotes the polysector $\prod_{j=1}^{n}S(\a_j,\b_j;\rho_j)$.
If $\btheta=(\theta_1,\ldots,\theta_n)$ is such that $\a_j<\theta_j<\b_j$ for every $j$, we say $\btheta$ is a multidirection in $S(\balpha,\bbeta;\brho)$.\par
$\partial S$ denotes the boundary of $S$, while $\partial_d S$ stands for the distinguished boundary, i.e.,
$$\partial_d S=\{(z_1,\ldots,z_n)\in \overline{S}: z_j\in\partial S_j\hbox{ for every }j\in\{1,\ldots,n\}\}.$$
\indent $\OO(S)$, resp.\ $\CC(S)$, will stand for the space of holomorphic, resp.\ continuous, complex functions defined in $S$.\par
For a set $B\subset\C\setminus\{0\}$, we put $B^{-1}:=\{z\in\C:z^{-1}\in B\}$.\par

For $n\in\n$, we put $\mathcal{N}=\{1,2,\ldots,n\}$.
For $J\subset\mathcal{N}$, $\#J$ denotes its cardinal number, and
$J':=\mathcal{N}\setminus J$. For $j\in\mathcal{N}$ we use ${j\,}'$ instead of $\{j\}'$.
In particular, we shall use these conventions for multiindices.\par
Given $\bzeta\in\C^n$ and a nonempty subset $J$ of $\mathcal{N}$, we write
$\bzeta_J$ for the restriction of $\bzeta$ to $J$, considering $\bzeta$
as an element of $\C^{\mathcal{N}}$. Similarly, if $S=\prod_{j=1}^nS_j$
is a polysector of $\c^n$, then $S_J:=\prod_{j\in J}S_j\subset\c^J$.\par
Finally, $D(\boldsymbol{z},\boldsymbol{R})$ will denote the polydisc with centre $\bzeta\in\C^n$ and polyradius $\boldsymbol{R}\in(0,\infty)^n$.

\section{Preliminaries}
Strong \ases\ were introduced by H. Majima \cite{Majima,Majima2}. This notion generalizes to several complex variables the concept of \ase\ defined by H.\ Poincar\'{e} in 1886. Let us recall in this Section the main definitions and state the basic result of
the paper.

\begin{defi}\label{defsase}
Let $n\in \N$, $n\geq 1$, and $S\subseteq \C^n$ be a polysector. A function $f\in {\mathcal O}(S)$ has  a \sase\ in $S$ if there exists a family
\begin{equation}\label{e349}
\mathcal{F}=\{f_{\bN_J}\in\OO(S_{J'}): \emptyset\neq J \subset \mathcal{N}, \bN_J\in\N_{0}^J\},
\end{equation}
($f_{\bN_J}\in  \C$ if $J=\mathcal{N}$), such that, if we define for every $\bN\in \N_0^n$ the function
$$
\textrm{App}_{\bN}(\mathcal{F})(\bzeta):=\!\sum_{\emptyset\neq J\subset \mathcal{N}}\!(-1)^{\#J+1}\!\sum_{\scriptstyle\bH_J\in\N_{0}^J\atop\scriptstyle
\bH_J<\bN_J}f_{\bH_J}(\bzeta_{J'})\bzeta_J^{\bH_J},\qquad \bzeta\in S,
$$
then, for every subpolysector $T\prec S$ and every $\bN\in \N_0^n$, there exists $c=c(\bN,T)>0$ with
\begin{equation}
\label{e359}
|f(\bz)-\textrm{App}_{\bN}(\mathcal{F})(\bz)|\leq c|\bz|^{\bN}, \qquad \bz\in T.
\end{equation}
Following Haraoka \cite{Haraoka}, $f$ is said to have ${\boldsymbol{1}}$-Gevrey \sase\ in $S$ if the constant $c=c(\bN,T)$ in~(\ref{e359}) may be chosen in the form
$$
c(\bN,T)=c\bA^{\bN}\bN!,
$$
for certain $c=c(T)>0$ and $\bA=\bA(T)\in(0,\infty)^n$ depending only on $T$.
\end{defi}

Let us note that ${\mathcal F}$ is uniquely determined by $f$. In fact, if $T\prec S$, $\emptyset \neq J\subseteq {\mathcal N}$ and $\bN_J\in\N_0^J$,
\begin{equation}\label{familiatotal}
\lim_{\scriptstyle \bz_{J}\to \textbf{0}_{J}\atop \scriptstyle \bz_{J}\in T_{J}}\frac{D^{(\bN_J,\mathbf{0}_{J'})}f(\bz)}{\bN_{J}!}=f_{\bN_{J}}(\bz_{J'}),
\end{equation}
the limit being uniform in $T_{J'}\prec S_{J'}$ whenever $J\neq\mathcal{N}$. We call $\mathcal{F}$ the total family of \sase\ of $f$, and we denote it from now on by $\textrm{TA}(f)$.

We will say that $f$ has null \sase\ if all the elements in $\textrm{TA}(f)$ identically vanish.

$\textrm{TA}(f)$ turns out to be a coherent family, in the following sense:

\begin{defi}\label{deficohe}
A family $\FF$ as in (\ref{e349}) is coherent in $S$ if for every $T\prec S$, every disjoint nonempty $J,L\subseteq {\mathcal N}$, and every $\bN_J\in \N_0^J$, $\bN_L\in \N_0^L$, we have
$$\lim_{\scriptstyle\bz_{L}\to \mathbf{0}_{L}\atop\scriptstyle
\bz_{L}\in T_{L}}\frac{D^{(\bN_{L},\mathbf{0}_{(J\cup L)'})}f_{\bN_{J}}(z_{J'})}{\bN_{L}!}=f_{(\bN_{J},\bN_{L})}(z_{(J\cup L)'}),$$
the limit being uniform in $T_{(J\cup L)'}$ (if $J\cup L\neq {\mathcal N}$).
\end{defi}

For $n=1$, $\textrm{TA}(f)$ reduces to a sequence of complex numbers $\{a_n\}_{n\in\N_0}$, the coefficients of the formal power series $\hat{f}=\sum_{n\in\N_0}a_nz^n$ of asymptotic expansion in one variable. In this situation we will write $f\sim\hat{f}$, or $f\sim_k\hat{f}$ in case the asymptotic expansion is of Gevrey order $k>0$. Of course, coherence is always satisfied when $n=1$. 

Next we define, generalizing \cite{Fruzha}, a notion of \ase\ following a multidirection.

\begin{defi} \label{def376}
Let $n\in \N$, $S\subseteq \C^n$ a polysector, $f\in \OO (S)$, and $\btheta=(\theta_1,\ldots ,\theta_n )$ a multidirection in $S$. $f$ is said to have a \sase\ following $\btheta$ if there exists a coherent family $\FF$ on $S$ such that, for every $\bN\in\N_0^n$, there exists $c=c(\bN)>0$ verifying (\ref{e359}) for every $\bz\in S$ with $\arg(\bz)= \btheta$.
\end{defi}

Let us state now some definitions and a result relating null ${\boldsymbol{1}}$-Gevrey strong asymptotics to exponential decreasing, both following a multidirection.

\begin{defi}
Let $S\subseteq \C^n$ be a polysector and $\btheta$ a multidirection in $S$. We say $f$ has a ${\boldsymbol{1}}$-Gevrey \sase\ of type $\boldsymbol{R}= (R_1,\ldots,R_n)\in (0,\infty )^n$ following $\btheta$ if $f$ has a \sase\ following $\btheta$ and, moreover, the
constant $c$ that appears in Definition \ref{def376} is given as follows: for every $\delta>0$, there exists $C_1=C_1( \delta )>0$, such that
$$
c=c(\bN)=C_1\Big(\frac{1}{R_{1}}+\delta\Big)^{N_{1}}\cdot\ldots\cdot\Big( \frac{1}{R_{n}}+\delta\Big)^{N_{n}}\cdot \bN !,$$
for every $\bN=(N_1,\ldots,N_n)\in \N_0^n$.
\end{defi}

\begin{obse}
If $f$ and $g$ have ${\boldsymbol{1}}$-Gevrey \sase\ of types $\boldsymbol{R}= (R_1,\ldots,R_n)$ and $\widetilde{\bR}= (\widetilde{R}_1,\ldots,\widetilde{R}_n)$, respectively, following $\btheta$, then $f+g$ has ${\boldsymbol{1}}$-Gevrey \sase\ of type $\widehat{R}= (\widehat{R}_1,\ldots,\widehat{R}_n)$ following $\btheta$,
where $\widehat{R}_j=\min(R_j,\widetilde{R}_j)$, $j\in\mathcal{N}$.
\end{obse}

\begin{defi} \label{defexponplana}
Let $S\subseteq \C^n$ be a polysector, $f\in \OO (S)$, and $\btheta$ a multidirection in $S$. $f$ is said to be exponentially flat, or exponentially decreasing, of type $\boldsymbol{R}=(R_1,\ldots,R_n)\in (0,\infty )^n$ following $\btheta$ if for every $\delta>0$, there exists $M>0$ with
\begin{equation}\label{cotaexponencialplana}
|f(\bz)|\le Me^{-\frac{R_1-\delta}{|z_1|}-\ldots-\frac{R_n-\delta}{|z_n|}},
\end{equation}
for every $\bz=(z_1,\ldots,z_n)\in S$ with $\arg (\bz)= \btheta$.
\end{defi}

\begin{obse}\label{tipo0}
In the one variable case, one agrees that an exponentially flat function of type~0 following a given direction is just a function which is bounded in that direction. So, if in the previous Definition some of the components of $\bR$ were null, it would mean that we can delete the corresponding terms in the exponent appearing in the estimates~(\ref{cotaexponencialplana}).
\end{obse}

These definitions extend those in the one-variable case as stated in \cite{Fruzha}. Next result is essentially contained in \cite{javi1}: we state and prove it here because we need precise estimations of the types $\bR$.

\begin{prop}  \label{propoexpoplana}
Let $S\in \C^n$ be a polysector, $f\in \OO (S)$, $\btheta $ a multidirection in $S$, and $\bR\in (0,\infty )^n$. The following conditions are equivalent:
\begin{enumerate}
\item[(i)] $f$ has null \sase\ of Gevrey order $\boldsymbol{1}$ and type $\bR$ following $\btheta$.
\item[(ii)] $f$ is exponentially flat of type $\bR$ following $\btheta$.
\end{enumerate}
\end{prop}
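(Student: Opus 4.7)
\medskip
\noindent\textbf{Plan of proof.}
The argument reduces, via a routine identification, to a purely one-variable optimization carried out coordinate by coordinate. First I would observe that ``null'' \sase\ following $\btheta$ means the underlying coherent family $\mathcal{F}$ in Definition~\ref{def376} is taken to be identically zero, so that $\textrm{App}_{\bN}(\mathcal{F})\equiv 0$ for every $\bN\in\N_0^n$. Hence the Gevrey hypothesis~(i) is equivalent to: for every $\delta>0$ there exists $C_1=C_1(\delta)>0$ such that, for every $\bN\in\N_0^n$ and every $\bz\in S$ with $\arg(\bz)=\btheta$,
\begin{equation}\label{eqplan1}
|f(\bz)|\le C_1\,\bN!\,\prod_{j=1}^n\Bigl(\frac{1}{R_j}+\delta\Bigr)^{\!N_j}|z_j|^{N_j}.
\end{equation}
Analogously, (ii) is a condition with the multidirection fixed, so $r_j=|z_j|$ are the only free moduli involved.

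\medskip
\noindent\textbf{From (i) to (ii).}
I would estimate~(\ref{eqplan1}) by optimizing in each $N_j$ separately. Fix $\delta>0$ and set $c_j:=r_j(1/R_j+\delta)$. Using the elementary bound $N^N e^{-N}\le N!$ we get $N_j!\,c_j^{N_j}\ge (N_jc_j/e)^{N_j}$, and choosing $N_j$ equal to the integer part of $1/c_j$ yields $\inf_{N_j}N_j!\,c_j^{N_j}\le K\,e^{-1/c_j}$ for a universal constant $K$ (accounting for rounding and a Stirling polynomial factor). Since $1/c_j=R_j/(r_j(1+R_j\delta))$, for any prescribed $\delta'>0$ I can pick $\delta$ small enough so that $1/c_j\ge (R_j-\delta')/r_j$ for $j=1,\ldots,n$ (explicitly it suffices that $\delta\le R_j^{-2}\delta'/(1+R_j\delta')$). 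Combining all $n$ coordinate infima proves~(\ref{cotaexponencialplana}) with constant $M=C_1K^n$.

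\medskip
\noindent\textbf{From (ii) to (i).}
I would use the dual optimization: on the multidirection, for each $\delta>0$, $N\in\N_0$ and $r>0$,
\begin{equation*}
r^{-N}e^{-(R_j-\delta)/r}\le\max_{s>0}s^{-N}e^{-(R_j-\delta)/s}=\Bigl(\frac{N}{R_j-\delta}\Bigr)^{\!N}e^{-N}\le\frac{N!}{(R_j-\delta)^N},
\end{equation*}
again by $N^Ne^{-N}\le N!$. Multiplying these inequalities over $j=1,\ldots,n$ and using the hypothesized bound $|f(\bz)|\le M(\delta)\exp\!\bigl(-\sum_j(R_j-\delta)/r_j\bigr)$, I obtain
\begin{equation*}
|f(\bz)|\le M(\delta)\,\bN!\,\prod_{j=1}^n\frac{|z_j|^{N_j}}{(R_j-\delta)^{N_j}}.
\end{equation*}
Given $\delta'>0$, pick $\delta$ small enough so that $1/(R_j-\delta)\le 1/R_j+\delta'$ for all $j$ (say $\delta\le R_j^2\delta'/(1+R_j\delta')$). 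This is~(\ref{eqplan1}) with $\delta'$ in place of $\delta$ and the trivial (null) coherent family, finishing the proof.

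\medskip
\noindent\textbf{Main obstacle.}
There is no genuine conceptual difficulty: once one realizes that both conditions depend only on the moduli $r_j$ along the fixed multidirection, the passage from joint estimates to the product of coordinate estimates decouples the problem. The only delicate point is keeping track of how the two small parameters (the $\delta$ in the Gevrey type and the $\delta$ in the exponential decay) have to be chosen in terms of one another so that the claimed types $\bR$ are preserved exactly, and not degraded; this is just the bookkeeping carried out above with the explicit choices $\delta\le R_j^2\delta'/(1+R_j\delta')$.
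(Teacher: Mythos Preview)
Your approach is essentially the same as the paper's: both directions are handled by coordinate-wise optimization of $N!\,c^N$ (resp.\ $r^{-N}e^{-H/r}$) together with Stirling, and the same $\delta$-versus-$\delta'$ bookkeeping. The direction (ii)$\Rightarrow$(i) is fine as written.

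There is, however, a small slip in (i)$\Rightarrow$(ii). The claimed inequality
\[
\inf_{N\in\N_0} N!\,c^N \le K\,e^{-1/c}\quad\text{with a \emph{universal} }K
\]
is false: by Stirling, at the optimal $N=\lfloor 1/c\rfloor$ one has $N!\,c^N\sim\sqrt{2\pi/c}\,e^{-1/c}$ as $c\to 0^+$, so the ratio to $e^{-1/c}$ is unbounded. The lower bound $N!\ge N^Ne^{-N}$ you quote only tells you where the minimum sits; for the upper estimate you need the other half of Stirling. The paper fixes this in exactly the way your framework already accommodates: it uses Stirling in the form $N!\le C_\varepsilon\bigl((1+\varepsilon)/e\bigr)^N N^N$ (which absorbs the $\sqrt{N}$ factor into an arbitrarily small exponential loss), obtaining
\[
\inf_{N} N!\,c^N \le K_\varepsilon\,e^{-1/((1+\varepsilon)c)}.
\]
This extra $\varepsilon$ is then absorbed in the passage from $\delta$ to $\delta'$ just as you describe; simply replace your condition ``$1/c_j\ge (R_j-\delta')/r_j$'' by ``$1/((1+\varepsilon)c_j)\ge (R_j-\delta')/r_j$'' and choose $\varepsilon$ and $\delta$ small in terms of $\delta'$. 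With that correction your argument matches the paper's.
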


\begin{proof} (i)$\Rightarrow$(ii). Given $\delta_1>0$, there exists $C>0$ such that, for every $\bN=(N_1,\dots,N_n)\in \N_0^n$,

$$|f(\bz)|\le C\prod_{j=1}^{n}\Big(\Big(\frac{1}{R_{j}}+\delta_1\Big)^{N_{j}}N_{j}!|z_{j}|^{N_{j}}\Big)$$
on $\btheta$. Stirling's Formula shows that, if $\eps >0$, a constant $C'>0$ exists with
$$N_j !\le C'(e^{-1}(1+\ep))^{N_j}N_j^{N_j}.
$$
Then,
$$|f(\bz)|\le CC'\prod_{j=1}^{n}\Big(\Big(\frac{1}{R_{j}}+\delta_1\Big)e^{-1}(1+\eps)N_{j}|z_{j}|\Big)^{N_{j}}.$$
Denote $A_j=\left( \frac{1}{R_j}+\delta_1 \right) e^{-1} (1+\eps )$. The real function $g(x)=(Ax)^x$ ($A>0$, $x>0$), takes its minimum value at $x= (Ae)^{-1}$. So, put $x_j=(A_j |z_j |e)^{-1}$, and take $N_{0j}\in\N_0$ with $N_{0j}\leq x_j< N_{0j}+1$. We have
\begin{align*}
A_{j}^{N_{0j}}N_{0j}^{N_{0j}}|z_{j}|^{N_{0j}}&\le \Big(A_{j}x_{j}|z_{j}|\Big)^{N_{0j}}= e^{-N_{0j}}< e e^{-x_{j}}\\
&=e \exp\Big(-\frac{1}{\Big(\frac{1}{R_{j}}+\delta_1\Big)(1+\ep)}\frac{1}{|z_{j}|}\Big).
\end{align*}
So, a constant $C''>0$ exists such that
$$|f(\bz)|\le C''\prod_{j=1}^{n}\exp\Big(-\frac{1}{\Big(\frac{1}{R_{j}}+\delta_1\Big)(1+\ep)}\frac{1}{|z_{j}|}\Big).$$
Now, given $\delta>0$, take positive $\delta_1$, $\eps$ such that $(1+\eps )\left( \frac{1}{R_j}+\delta_1 \right)<\frac{1}{R_j-\delta}$, and conclude.

\medskip
(ii)$\Rightarrow$(i). Conversely, for $\delta>0$, let us take $\delta_1>0$ such that $\delta_1<\min_j R_j$, and
\begin{equation}\label{deltaydelta1}
\frac{1}{R_j-\delta_1}\le \frac{1}{R_j}+\delta,\quad j=1,2,\ldots,n.
\end{equation}
According to Definition \ref{defexponplana}, there exists $M>0$ such that, for every $\bN= (N_1,\ldots,N_n)\in \N_0^n$, and every $\bz\in S$ with $\arg (\bz )=\btheta$, we have
$$\Big|\frac{f(\bz)}{\bz^{\bN}}\Big|\le M\prod_{j=1}^{n}|z_{j}|^{-N_j}\exp\Big(-\frac{R_j-\delta_1}{|z_j|}\Big).$$
If $N\in \N_0$ and $H>0$, the function $g(x)=x^{-N}\cdot \exp (-H/x)$ is bounded above by $\left[ \frac{N}{eH}\right]^N$ on $(0,\infty )$ (where $\left[ \cdot \right]$ denotes integer part). So
$$\Big|\frac{f(\bz)}{\bz^{\bN}}\Big|\le M\prod_{j=1}^{n}\Big[\frac{N_j}{e(R_j-\delta_1)}\Big]^{N_j}\le M_2 \bN!\prod_{j=1}^{n}
\Big(\frac{1}{R_j-\delta_1}\Big)^{N_j},$$
for some $M_2>0$ (again, by Stirling's Formula). Conclude using (\ref{deltaydelta1}).
\end{proof}

The main purpose of this paper will be to generalize the main result of \cite{Fruzha} to several complex variables. We recall here, for completeness, this result. We need first a definition.

\begin{defi}
Let $S$ be a polysector in $\C^n$. A function $f:S\to\C$ is said to be asymptotically bounded in $S$ if it is bounded on every subpolysector $T\prec S$.
\end{defi}

\begin{teor}[Fruchard-Zhang \cite{Fruzha}, Thm.\ 1]\label{FZ1var}
Let $S=S(\alpha, \beta; \rho )$ be a sector, and $f\in \OO (S)$ asymptotically bounded. Let $\hat{f}(z)\in \C [[z]]$ and $\theta$ be a direction in $S$.
\begin{enumerate}
\item[(i)] If $f\sim \hat{f}$ following $\theta$, then $f\sim \hat{f}$ in $S$.
\item[(ii)] The same result holds for 1-Gevrey asymptotics. More precisely, if $f\sim_1 \hat{f}$ following $\theta_0$ with type $R(\theta_0)>0$, then along any direction $\theta $ of $S$, $f\sim_1 \hat{f}$ following $\theta$ with type $R(\theta )$, where $R(\theta)$ is defined as follows:
$$
R(\theta)=\begin{cases}
	 R(\theta_0)\frac{\sin(\theta-\a)}{\sin(\a'-\a)} &\text{ if }\theta\in(\a,\a'] \\
	R(\theta_0)&\text{ if }\theta\in[\a',\b'] \\
	 R(\theta_0)\frac{\sin(\theta-\b)}{\sin(\b'-\b)} &\text{ if }\theta\in[\b',\b),
	\end{cases}
$$
where $\a'= \min \{ \theta_0, \a+\frac{\pi}{2}\}$, $\b'=\max \{ \theta_0, \b -\frac{\pi}{2}\}$.
\end{enumerate}
\end{teor}

\section{Classical results revisited}

The following Lemma is an extension to the boundary of a
well-known result of complex variables. It will allow us to make
some reasonings by induction.

\begin{lema}  \label{lemafelix}
Let $S=S_1\times \cdots \times S_n\subseteq \C^n$ be a
polysector, $f:\overline{S}\rightarrow \C$ continuous and holomorphic
in $S$. If $\ba= (a_2,\ldots ,a_n)\in \partial_d (S_2\times
\cdots \times S_n)$, then $f_{\ba}: S_1\rightarrow \C$ defined by
$f_{\ba} (z)= f(z,\ba)$ is holomorphic in $S_1$ (and continuous in
$\overline{S}_1$).
\end{lema}

\begin{proof}
Only the holomorphic part is interesting now. Take a sequence
$\{ \ba_m \in S_2\times \cdots \times S_n\}_{m}$ converging to
$\ba$, and $K\subseteq S_1$ a compact set. Consider the
holomorphic functions $f_m :S_1\rightarrow \C$ defined by $f_m
(z)= f(z,\ba_m)$.

If $L= \{ \ba_m\}_{m\in \N}\cup \{ \ba\}$, $f\mid_{K\times L} :
K\times L \rightarrow \C$ is uniformly continuous. So, given
$\eps>0$, $\exists\, \delta >0$ such that if $(z,\bw)$, $(z',
\bw')\in K\times L$, $\norma{(z,\bw)- (z',\bw')}<\delta$ implies
$\norm{f(z,\bw) - f(z',\bw')}<\eps$. If $m$ is big enough such
that  $\norma{\ba-\ba_m}<\delta$, then $\norm{f_{\ba}(z)-
f_m(z)}<\eps$ for every $z\in K$, and so, $\{ f_m\}_m$ converges uniformly in the compact subsets of $S_1$ to $f_{\ba}$.
\end{proof}

\begin{obse}
Lemma \ref{lemafelix} remains valid  if some (or all) components
of $\ba$ belong to the interior of the sectors. It allows to
generalize some well-known results from complex analysis, as
follows.
\end{obse}

\begin{lema}
Let $S$
be a polysector and $f\in
\OO (S)\cap {\mathcal C} (\overline{S})$. If $\norm{f(\bz)}\leq M$ for
all $\bz \in \partial_d S$, then $\norm{f(\bz)}\leq M$ in
$\overline{S}$.
\end{lema}

\begin{proof}
We reason by induction in $n$. If $n=1$, there is nothing to prove. If $n>1$,
let $\bz= (z_1,\ldots ,z_n)\in \partial S$. Assume, without loss of generality, that $z_1\in
\partial S_1$ ($S=S_1\times \cdots \times S_n)$, and consider
$f_{z_1} (w_2,\ldots ,w_n)\in \OO (S_2\times \cdots \times
S_n)\cap {\mathcal C}(\overline{S_2\times \cdots \times S_n})$
constructed as in Lemma \ref{lemafelix}. We have $\norm{f_{z_1}
(w_2,\ldots ,w_n)}\leq M$ on $\partial_d (S_2\times \cdots
\times  S_n)$, and by induction hypothesis, $\norm{f_{z_1}
(w_2,\ldots ,w_n)}\leq M$ on $\overline{S_2\times \cdots \times
S_n}$. Then $\norm{f(\bz)}\leq M$ on $\partial S$, and so, on
$S$.
\end{proof}

Finally, we state an appropriate version of Phragm\'en-Lindel\"{o}f
principle in several variables.

\begin{teor}[Phragm\'en-Lindel\"{o}f]\label{phragmenlindelofvv}
Let $S=\prod_{i=1}^n S_i = S(\balpha,\bbeta; \brho )$ be
a polysector, $f\in \OO (S)\cap {\mathcal C} \big(
\prod_{j=1}^n (\overline{S}_j\setminus \{ 0\})\big)$, and
assume $\norm{f(\bz)}\leq M$ for every $\bzeta\in\prod_{j=1}^n\big(\partial S_j\setminus\{0\}\big)$. Assume also that, for
a certain $j\in \{ 1,\ldots ,n\}$, there are strictly positive
functions
$$
K(\bzeta_{{j\,}'})=K(z_1,\ldots,\hat{z}_j,\ldots,z_n), \text{ and }
L(\bzeta_{{j\,}'})=L(z_1,\ldots,\hat{z}_j,\ldots,z_n),
$$
and $a_j<\dfrac{\pi}{\beta_j-\alpha_j}$ such that
\begin{equation}
\label{e504}
|f(\bz)|\le K(z_1,\ldots,\hat{z}_j,\ldots,z_n) \exp{\Big(\frac{L(z_1,\ldots,
\hat{z}_j,\ldots,z_n)}{|z_{j}|^{a_{j}}}\Big)}
\end{equation}
on $S$. Then $\norm{f(\bz)}\leq M$ on $S$.
\end{teor}

\begin{proof}
Consider $\bz_0= (z_{01}, \ldots ,z_{0n})\in S$, and $f_{\bz_0,j}
\in \OO (S_j)\cap {\mathcal C}(\overline{S_j}\setminus \{ 0\})$
defined by
$$
f_{\bz_0,j} (z_j)= f(z_{01}, \ldots ,z_j,\ldots, z_{0n}).
$$
Applying Phragm\'en-Lindel\"{o}f principle in one variable (more precisely, the
version stated in \cite[\S 2]{Fruzha}), we have that $\norm{f_{\bz_0,j}
(z_j)}\leq M$ on $S_j$, and the result follows.
\end{proof}

\section{Strong \ases\ following a direction}

Our first objective will be to show that if $S$
is a polysector, and $f\in \OO (S)$ has null \sase\ following a
multidirection, then $f$ has null \sase\ in $S$. In one
variable, the key result is Lemma 2 from \cite{Fruzha}, which is very similar to forthcoming Lemma~\ref{asqueroso}. However, Lemma 2 there imposes some technical conditions, in particular, the
constant $C$ that appears needs to be at least 1. The
generalization to the several variables case needs to drop this
condition, but then the proof presented in \cite{Fruzha} is no longer
valid. We give in the sequel the necessary modifications.

\begin{lema} \label{cotaeninfinito}
Let $S=S(\alpha,\beta;\infty)$ be an
unbounded sector, and $f\in \OO (S)\cap {\mathcal C}(\overline{S})$ such
that $\norm{f(z)}\leq M$ for every $z\in S$, and $\norm{f(z)}\leq
\frac{C}{\norm{z}^\lambda}$ on $\arg (z)=\alpha$, for some $M,C,\lambda>0$. Then,
$$
\norm{f(z)}\leq K \cdot \Big(
\frac{C}{\norm{z}^{\lambda}}\Big)^{\frac{\beta-\theta}{\beta
- \alpha}},
$$
for every $z\in \overline{S}\setminus\{0\}$, where $\theta=\arg (z)$, $K=\max \{ 1,M\}$.
\end{lema}

\begin{proof}

The proof of Lemma 2 in \cite{Fruzha} shows how to construct a holomorphic function $h$ in $\overline{S}\setminus \{ 0\}$ such that
$$
\norm{e^{h(z)}}=e^{\Re h(z)}= \Big(
\frac{C}{\norm{z}^\lambda}\Big)^{\frac{\beta-\theta}{\beta-\alpha}}
$$
for every $z$. Indeed, one defines
$$
h(z)=-\frac{i\lambda}{2(\beta-\alpha)}(\log(z))^2+
\frac{-\lambda\beta+i\ln C}{\beta-\alpha}\,\log(z)+\frac{\beta\ln C}{\beta-\alpha}.
$$
Consider $g(z):=f(z)e^{-h(z)}$. If
$\arg (z)= \alpha$, we have
$$
\norm{g(z)}\leq \Big(
\frac{C}{\norm{z}^{\lambda}}\Big)\cdot \Big(
\frac{C}{\norm{z}^{\lambda}}\Big)^{-1}=1.
$$
If $\arg (z)=\beta$, $\norm{g(z)}\leq M$. As $g$ has
subexponential growth at infinity, \PL\ principle shows that
$\norm{g(z)}\leq K$. Then
$$
\norm{f(z)}\leq \norm{g(z)}\cdot \norm{e^{h(z)}}\leq K\cdot
\Big(
\frac{C}{\norm{z}^\lambda}\Big)^{\frac{\beta-\theta}{\beta-\alpha}}.
$$
\end{proof}

Let us now return to the origin. Given two rays $\arg(z)=\alpha$,
$\arg(z)=\beta$, a \textit{sectorial domain} of opening
$(\alpha,\beta)$ is an open set $U$, such that $0\in
\overline{U}\setminus U$, and that for each $(\alpha',\beta')$,
$\alpha<\alpha'<\beta'<\beta$, there exists $R(\alpha',\beta')>0$
such that  $S(\alpha',\beta'; R(\alpha',\beta'))\subseteq U$.
Most results concerning \ases\ on sectors can be restated, with
obvious modifications, for sectorial domains, as they are often
of local nature with respect to the origin and the radii of the
sectors involved are less important than the openings. Though we are interested in obtaining estimates on a whole bounded sector, sectorial domains will be useful in our argument.
To simplify notation, we will
consider in Lemma~\ref{asqueroso} a sector bisected by the real axis, of opening
$2\alpha<\frac{\pi}{2}$ and of radius 1, i.e., $S=S(-\alpha,
\alpha; 1)$, $0<\alpha<\frac{\pi}{4}$.

\begin{lema} \label{asqueroso}

Let $f\in \OO (S)\cap {\mathcal C}(\overline{S}\setminus\{0\})$, such that
$\norm{f(z)}\leq M$ on $S$ and $\norm{f(z)}\leq C
\norm{z}^{\lambda}$ on $\arg( z)=-\alpha$, for some $M,C,\lambda>0$. Then, given $\eps\in(0,1)$ there exists a
constant $K>0$ such that on $\overline{S}\setminus\{0\}$ we have
\begin{equation}\label{cotaepsilon}
\norm{f(z)}\leq K \cdot \left(
C\norm{z}^{\lambda}\right)^{(1-\eps)\frac{\alpha-\theta}{2\alpha}},
\end{equation}
with $\theta= \arg (z)$.
\end{lema}

\begin{proof}
Consider $F(z)=f\left(\frac{1}{z}\right)$, defined on
$$
\overline{S^{-1}}= \{ z\in \C \mid -\alpha \leq \arg (z)\leq\alpha,\
\norm{z}\geq 1\}.
$$
Denote $V= S(-\alpha, \alpha; \infty )$. For every $z\in\overline{S^{-1}}$ with $\arg(z)=\alpha$ we have that $\norm{F(z)}\leq
\frac{C}{\norm{z}^{\lambda}}$. Take $z_0=e^{i\alpha}$, and
consider $G(z)= F(z+z_0)$, defined on $\overline{V}$. On
$\arg{z}=\alpha$ we have
$\norm{G(z)}\leq \frac{C}{\norm{z+z_0}^\lambda}$ , and also $\norm{z}^\lambda \leq
\norm{z+z_0}^{\lambda}$, so $\norm{G(z)}\leq
\frac{C}{\norm{z}^\lambda}$ there.

Using Lemma \ref{cotaeninfinito} for the function $G_1(z)=\overline{G(\overline{z})}$, we deduce that
$$
\norm{G(z)}\leq K_0\cdot \Big(
\frac{C}{\norm{z}^\lambda}\Big)^{\frac{\theta(z)+\alpha}{2\alpha}}
$$
on $\overline{V}\setminus\{0\}$, where $\theta(z)=\arg(z)$ and $K_0=\max \{ 1,M\}$. Hence,
$$
\norm{F(z)}\leq K_0\cdot \Big(
\frac{C}{\norm{z-z_0}^{\lambda}}\Big)^{\frac{\theta_0(z)+\alpha}{2\alpha}}
$$
on $z_0+(\overline{V}\setminus\{0\})$, with $\theta_0(z)=\arg (z-z_0)$. Write
$$
\norm{F(z)}\leq K_0\cdot \Big(
\frac{C}{\norm{z}^{\lambda}}\Big)^{\frac{\theta_0(z)+\alpha}{2\alpha}}\cdot
\norm{\frac{z}{z-z_0}}^{\lambda\cdot\frac{\theta_0(z)+\alpha}{2\alpha}}.
$$
Consider $a>1$, to be specified later on, and $z_1= az_0$. It is clear that $z_1+\overline{V}\subseteq z_0+(\overline{V}\setminus\{0\})$, and that
$$
K_1(a):=\sup \left\{ \norm{\frac{z}{z-z_0}}: z\in z_1+\overline{V}\right\}\in(1,\infty).
$$
Since $\frac{\theta_0(z)+\alpha}{2\alpha}\in[0,1]$ for every $z\in z_1+\overline{V}$, we have
in this set that
$$
\norm{F(z)}\leq K_0\cdot K_1(a)^{\lambda} \cdot \Big(
\frac{C}{\norm{z}^\lambda}\Big)^{\frac{\theta_0(z)+\alpha}{2\alpha}}.
$$
Now, given $\eps\in(0,1)$, there exists $a>0$ such that
$$
1-\eps \leq \frac{\theta_0 (z)+\alpha}{\theta (z)+\alpha} \quad
\text{ for every } z\in z_1+\overline{V},
$$
where, as before, $\theta(z)=\arg (z)$. We impose also that $a$ is big enough so that
$\frac{C}{a^{\lambda}}<1$.
Then, as
$\norm{z}>a$ on $z_1+\overline{V}$, we obtain that
$$
\norm{F(z)}\leq K_0\cdot K_1^{\lambda}\cdot \Big(
\frac{C}{\norm{z}^{\lambda}}\Big)^{\frac{\theta(z)+\alpha}{2\alpha}\cdot
\frac{\theta_0 (z)+\alpha}{\theta (z)+\alpha}}<
K_0\cdot K_1^{\lambda}\cdot \Big(
\frac{C}{\norm{z}^\lambda}\Big)^{(1-\eps)\frac{\theta
(z)+\alpha}{2\alpha}}.
$$
So, if we consider the sectorial domain $U_1:=(z_1+V)^{-1}$, on $\overline{U}_1\setminus\{0\}$ we have
$$
\norm{f(z)}\leq K_0\cdot K_1^{\lambda}\cdot
\left(C\norm{z}^{\lambda}\right)^{(1-\eps)\frac{\alpha-\theta(z)}{2\alpha}},
$$
as desired. However, some extra effort is needed in order to conclude, since $U_1$ does not contain a sector $S(-\alpha,\alpha;\rho)$ for any $\rho>0$. Observe that the opening of $U_1$ is $(-\alpha,\alpha)$, and its boundary consists of the segment $[0,\frac{1}{a}e^{-i\alpha}]$ and an arc in $\overline{S}$ joining $\frac{1}{a}e^{-i\alpha}$ and 0. If we change in the previous argument the point $z_0$ into $\overline{z}_0=e^{-i\alpha}$, everything can be repeated word by word, obtaining a large enough value $a>0$ such that, if we consider the sectorial domain $U_2:=(ae^{-i\alpha}+V)^{-1}$, on $\overline{U}_2\setminus\{0\}$ we have
$$
\norm{f(z)}\leq K_0\cdot K_2^{\lambda}\cdot
\left(C\norm{z}^{\lambda}\right)^{(1-\eps)\frac{\alpha-\theta(z)}{2\alpha}},
$$
for a suitable constant $K_2>0$. We note that the opening of $U_2$ is again $(-\alpha,\alpha)$, but now its boundary consists of the segment $[0,\frac{1}{a}e^{i\alpha}]$ and an arc in $\overline{S}$ joining $\frac{1}{a}e^{i\alpha}$ and 0. One easily realizes that $U_1\cup U_2$ does contain a sector $T=S(-\alpha,\alpha;\rho)$ for suitable $\rho>0$, depending on~$a$. On $\overline{T}\setminus\{0\}$ we have the desired estimates for $f$, with $K=K_0(\max\{K_1,K_2\})^{\lambda}$, and they can be extended to the whole of $\overline{S}\setminus\{0\}$ by suitable enlarging the constant $K$, again in a way which ultimately depends only on $a$.
\end{proof}

\begin{obse}  \label{cotauniforme}
The reader can perform the technical modifications (basically consisting of scaling, rotation and/or ramification) to deal with the case of an arbitrary bounded sector in $\C$. Most importantly,
the constant $K$ appearing in~(\ref{cotaepsilon}) depends
only on $a$, which in turn depends on $\eps$ and, moreover, has been chosen under the requirement that
$\frac{C}{a^\lambda}<1$. So, the value of $a$, and consequently that of $K$, can be taken uniformly for every $C'<C$. This is essential in the extension of the Lemma~\ref{asqueroso}
to several variables, as follows.
\end{obse}

\begin{lema}\label{asquevv}
Let $S=S_1\times \cdots \times S_n= S(\balpha, \bbeta; \brho)$ be
a bounded polysector in $\C^n$, and $f\in \OO (S)\cap {\mathcal C}\big(
\prod_{j=1}^n (\overline{S}_j\setminus \{0\})\big)$ such that $|f(\bz)|\le M$ for every $\bz\in S$. Assume there exist $C>0$ and
$\blambda = (\lambda_1,\ldots ,\lambda_n)\in (0,\infty )^n$ such
that $\norm{f(\bz)}\leq C\cdot {\norm{\bz}}^{\blambda}$ for every $\bz\in \overline{S}$ with $\arg (\bz)=\balpha$. Then, given
$\eps\in(0,1)$ there exists $K>0$ such
that for every $\bz\in \prod_{j=1}^n (\overline{S}_j\setminus \{ 0\})$ we have
$$
\norm{f(\bz)} \leq K\cdot \left(C
\norm{\bz}^{\blambda}\right)^{(1-\eps)^n\cdot \prod_{j=1}^n
\mu_j(z_j)},
$$
where $$\mu_j (z_j)= \frac{\beta_j-\arg
(z_j)}{\beta_j-\alpha_j},\quad j=1,\ldots,n.$$
\end{lema}

\begin{proof}
For the sake of simplicity, and since the general case is treated equally, assume $n=2$. On
$\arg(\bz)=(\alpha_1,\alpha_2)$ we have $\norm{f(\bz)}\leq C\norm{z_1}^{\lambda_1} \norm{z_2}^{\lambda_2}$. Fix $z_1\in S_1$
with $\arg(z_1)=\alpha_1$, and consider the holomorphic function
$f_{z_1}(z_2):= f(z_1,z_2)$ on $S_2$. Given $\eps\in(0,1)$, there
exists $K_2>0$ such that
$$
\norm{f_{z_1}(z_2)}\leq K_2\cdot \left(C\norm{z_1}^{\lambda_1}
\norm{z_2}^{\lambda_2}\right)^{(1-\eps)\mu_2(z_2)}
$$
for every $z_2\in S_2$.
Here, the role of the constant $C$ in Lemma \ref{asqueroso} is
played by $C\norm{z_1}^{\lambda_1}$. Let us note that for every $z_1\in S_1$ we have
$C\norm{z_1}^{\lambda_1}\le C\rho_1^{\lambda_1}$. So, by Remark
\ref{cotauniforme}, the constant $K_2$ can be chosen independent of
$z_1$, as long as $\arg(z_1)=\alpha_1$.

Fix now $z_2\in S_2$. We have
$$
\norm{f(z_1,z_2)}\leq K_2
(C\norm{z_2}^{\lambda_2})^{(1-\eps)\mu_2(z_2)}
\norm{z_1}^{\lambda_1 (1-\eps)\mu_2(z_2)}\text{ \ on
}\arg(z_1)=\alpha_1.
$$
Applying again  Lemma \ref{asqueroso}, there exists a constant $K_1>0$, which, reasoning as before, can be chosen independent of $z_2$, such that
$$
\norm{f(z_1,z_2)}\leq K_1\cdot K_2^{(1-\eps)\mu_1(z_1)}\cdot
(C\norm{z_2}^{\lambda_2})^{(1-\eps)^2 \mu_1(z_1)\mu_2(z_2)}\cdot
\norm{z_1}^{\lambda_1 (1-\eps)^2 \mu_1(z_1)\mu_2(z_2)}.
$$
The result follows.
\end{proof}

Let us apply this Lemma to show that null \sase\ on a
multidirection implies null \sase\ on the polysector.

\begin{prop} \label{danulo}
Let $S\subseteq \C^n$ be a polysector, and $f\in \OO (S)$ asymptotically bounded in~$S$.
If $f$ has null \sase\ following a
multidirection $\balpha= (\alpha_1,\ldots ,\alpha_n)$ in $S$, then $f$
has null \sase\ in $S$.
\end{prop}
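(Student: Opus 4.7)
The plan is to establish, for every subpolysector $T \prec S$ and every $\bN \in \N_0^n$, the pointwise estimate $|f(\bz)| \le c(\bN,T)|\bz|^{\bN}$ on $T$; such bounds express that the zero family (which is trivially coherent) is a total family of \sase\ for $f$ on $S$, so by uniqueness via formula~(\ref{familiatotal}) they force $\textrm{TA}(f) = 0$. The only tool needed is Lemma~\ref{asquevv}, which lifts corner-multidirection estimates to the whole polysector. The hypothesis supplies precisely such corner estimates: since the coherent family in Definition~\ref{def376} is the zero family, for every $\blambda \in (0,\infty)^n$ there exists $C = C(\blambda) > 0$ with $|f(\bz)| \le C|\bz|^{\blambda}$ whenever $\arg(\bz) = \balpha$.

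Fix $T \prec S$ and $\bN \in \N_0^n$. First I would pick an intermediate polysector $T^* \prec S$ with $T \prec T^*$ and $\balpha$ still a multidirection of $T^*$; asymptotic boundedness then gives $|f| \le M$ on $T^*$. Since $\balpha$ is interior to $T^*$, I would split $T^* = \bigcup_{\bsigma \in \{\pm 1\}^n} T^{*,\bsigma}$, where $T^{*,\bsigma}_j$ is the half of $T^*_j$ lying on one or the other side of the ray $\arg(z_j) = \alpha_j$ according to the sign of $\sigma_j$. On each $T^{*,\bsigma}$ the multidirection $\balpha$ now occupies a corner, so, after the rotations permitted by Remark~\ref{cotauniforme}, Lemma~\ref{asquevv} applies and yields, for any prescribed $\eps \in (0,1)$, a constant $K_{\bsigma} > 0$ with
\[
|f(\bz)| \le K_{\bsigma}\bigl(C|\bz|^{\blambda}\bigr)^{(1-\eps)^n \prod_{j=1}^n \mu_j^{\bsigma}(z_j)}, \qquad \bz \in T^{*,\bsigma},
\]
where $\mu_j^{\bsigma}$ is the linear interpolant in $\arg(z_j)$ sending the corner ray $\arg(z_j) = \alpha_j$ to $1$ and the opposite boundary of $T^{*,\bsigma}_j$ to $0$.

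Since $T \prec T^*$, each $\mu_j^{\bsigma}$ is bounded below by a strictly positive constant $m_j^{\bsigma}$ on $T_j \cap T^{*,\bsigma}_j$; set $m := (1-\eps)^n \min_{\bsigma}\prod_j m_j^{\bsigma} > 0$. For the given $\bN$, choose $\blambda$ with $\lambda_j \ge N_j/m$, which in turn fixes $C = C(\blambda)$. Writing $p(\bz) := (1-\eps)^n\prod_j \mu_j^{\bsigma}(z_j) \in [m,1]$, and restricting to $|z_j|\le 1$ (a case one may assume after shrinking $T$, since elsewhere $|\bz|^{\bN}$ and $|f|$ are both comparable to positive constants), the previous display gives
\[
|f(\bz)| \le K_{\bsigma}\,\max(C,1)\prod_{j=1}^n |z_j|^{\lambda_j p(\bz)} \le K_{\bsigma}\,\max(C,1)\,|\bz|^{\bN},
\]
using $C^{p(\bz)} \le \max(C,1)$ (since $p(\bz) \in [0,1]$) and $|z_j|^{\lambda_j p(\bz)} \le |z_j|^{\lambda_j m} \le |z_j|^{N_j}$. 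Gathering the $2^n$ pieces produces the desired $c(\bN,T)$.

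The main obstacle I expect is the $2^n$-fold splitting, which is the multivariate analogue of the two-sided split implicit in the one-variable argument of~\cite{Fruzha}; beyond increasing the bookkeeping, each piece requires the fully $n$-dimensional Lemma~\ref{asquevv} rather than its one-variable predecessor. A secondary issue is that the choice of $\blambda$, and hence of $C$, must be tailored to both $\bN$ and to the geometric data $m_j^{\bsigma}$ of $T$ inside $T^*$; however, $\eps$ can be fixed once and for all (e.g.\ $\eps = 1/2$). Apart from these points, the Phragm\'en--Lindel\"{o}f content is entirely absorbed by Lemma~\ref{asquevv}, and the argument is a direct multivariate elaboration of the Fruchard--Zhang scheme.
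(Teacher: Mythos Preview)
Your argument is essentially correct and mirrors the paper's proof: both reduce to the situation where $\balpha$ sits at a corner of the working polysector (the paper phrases this as ``splitting or widening each $T_j$'', you do it via an intermediate $T^*$ and an explicit $2^n$-fold split), then invoke Lemma~\ref{asquevv}, and finally exploit the strict containment $T\prec T^*$ to bound the exponents $\mu_j^{\bsigma}$ uniformly from below before choosing $\blambda$ large enough.

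The one slip is your reduction to $|z_j|\le 1$: the parenthetical justification ``elsewhere $|\bz|^{\bN}$ and $|f|$ are both comparable to positive constants'' is false, since when some $|z_j|>1$ but other $|z_k|$ are small, $|\bz|^{\bN}$ is \emph{not} bounded below. The paper handles this cleanly by rescaling, setting $g(\bz)=f(\rho_1 z_1,\ldots,\rho_n z_n)$ so that the working polysector has all radii equal to $1$; alternatively you can simply note that $|z_j|^{\lambda_j p(\bz)}\le \max\bigl(1,(\rho_j^*)^{\lambda_j}\bigr)\,|z_j|^{N_j}$ also holds when $1<|z_j|\le \rho_j^*$, since then $|z_j|^{N_j}\ge 1$. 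With either fix your proof goes through.
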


\begin{proof}
Fix $T=\prod_{j=1}^n T_j\prec S$ and $\bN=(N_1,\ldots ,N_n)\in \N_0^n$.
After suitably splitting or widening each $T_j$, according to whether $\alpha_j$ is a direction in $T_j$ or not, one clearly sees that it suffices to deal with the case that
$\arg(z_j)=\alpha_j$ is one of the edges of the sector $T_j$. Indeed, we will consider $T_j=S(\alpha_j,\beta_j; \rho_j)$ for some $\beta_j>\alpha_j$ and $\rho_j>0$.
Take $\delta>0$ such that $\prod_{j=1}^n S(\alpha_j,
\beta_j+\delta; \rho_j) \prec S$, and consider $V=\prod_{j=1}^n V_j$, where $V_j=S(\alpha_j,\beta_j+\delta;1)$ for $j=1,\ldots,n$. Let us define the function
\begin{equation}\label{gapartirdef}
g(z_1,\ldots,z_n)=f(\rho_1 z_1,\ldots,\rho_n z_n).
\end{equation}
The hypotheses imposed on $f$ immediately imply that $g\in \OO (V)\cap {\mathcal C}\big(\prod_{j=1}^n (\overline{V}_j\setminus \{0\})\big)$, and that there exists $M>0$ such that $|g(\bz)|\le M$ for every $\bz\in V$.
Fix $\eps\in(0,1)$, and for $j=1,\ldots,n$, take $\lambda_j\in\N_0$ verifying
\begin{equation}\label{lambdaN}
\frac{\lambda_j (1-\eps)^n \delta^n }{\prod_{j=1}^n
(\beta_j+\delta-\alpha_j)} \geq N_j.
\end{equation}
Put $\blambda=(\lambda_1,\ldots,\lambda_n)$.
Since $f$ has null \sase\ following $\balpha$, there exists $C>0$ such that
$$
|f(\bz)|\le C|\bz|^{\blambda}\qquad\textrm{ on }\ \arg(\bz)=\balpha,\ \bz\in S.
$$
Hence, for every $\bz\in\overline{V}$ with $\arg(\bz)=\balpha$ we have
$$
|g(\bz)|\le C\rho_1^{\lambda_1}\cdots\rho_n^{\lambda_n}|\bz|^{\blambda}=C_1|\bz|^{\blambda}.
$$
By Lemma~\ref{asquevv}, there exists $K>0$ such that for every $\bz\in\prod_{j=1}^n (\overline{V}_j\setminus \{0\})$,
\begin{equation}\label{cotasgV}
\norm{g(\bz)} \leq K\cdot \left(C_1
\norm{\bz}^{\blambda}\right)^{(1-\eps)^n\cdot \prod_{j=1}^n
\mu_j(z_j)},
\end{equation}
where $$\mu_j (z_j)= \frac{\beta_j+\delta-\arg
(z_j)}{\beta_j+\delta-\alpha_j},\quad j=1,\ldots,n.$$
Note that for $z_j\in S(\alpha_j,\beta_j;1)\subset V_j$ one has
$$
|z_j|< 1,\qquad \mu_j (z_j)\ge \frac{\delta}{\beta_j+\delta-\alpha_j},\qquad j=1,\ldots,n.
$$
Taking into account these inequalities, (\ref{cotasgV}) and (\ref{lambdaN}), we find that there exists $C_2>0$ such that for every $\bz\in \prod_{j=1}^n S(\alpha_j,\beta_j;1)$,
$$
|g(\bz)|\le C_2|\bz|^{\bN}.
$$
Finally, recall (\ref{gapartirdef}) to conclude that for every $\bz\in T$,
$$
|f(\bz)|=\big|g\big(\frac{1}{\rho_1}z_1,\ldots,\frac{1}{\rho_n}z_n\big)\big|\le
\frac{C_2}{\rho_1^{N_1}\cdots\rho_n^{N_n}}|\bz|^{\bN},
$$
as desired.
\end{proof}

We shall now extend this result to the Gevrey case in several
variables. The statements are similar to the corresponding ones in one variable,
and we will only sketch the proofs. It is important to have in mind what a type being 0 means for an exponentially flat function (see Remark~\ref{tipo0}).

\begin{lema} \label{exponencial}
Let $S=S(\balpha, \bbeta; \brho)$ be a polysector, with
$\beta_j-\alpha_j<\pi$ for $j=1,\ldots,n$, and $f\in \OO (S)\cap {\mathcal C} \big(
\prod_{j=1}^n (\overline{S}_j\setminus \{ 0\})\big)$ and bounded in $\prod_{j=1}^n\big(\overline{S}_j\setminus\{0\}\big)$. Let $R_j(\alpha_j), R_j(\beta_j)$, for $j\in\mathcal{N}$, be nonnegative numbers such that  for every
multidirection $\boeta= (\eta_1,\ldots ,\eta_n)$, $\eta_j\in \{
\alpha_j, \beta_j\}$, $f$ is exponentially flat of type
$\bR(\boeta)= (R_1(\eta_1),\ldots , R_n (\eta_n))$ on $\boeta$. Then,
for every multidirection $\btheta$ on $S$, $f$ is exponentially
flat of type $\bR(\btheta)= (R_1(\theta_1), \ldots ,R_n
(\theta_n))$, where $R_j(\theta_j) $ is defined as follows:
\begin{enumerate}
\item[(i)] If $R_j(\alpha_j)>0$, $R_j(\beta_j)>0$, take a circle
through the points $0$, $R_j(\alpha_j)e^{i\alpha_j}$, $R_j
(\beta_j) e^{i\beta_j}$. The straight line of angle $\theta_j$
intersects the circle in $0$, $R_j(\theta_j) e^{i\theta_j}$.
\item[(ii)] If $R_j(\alpha_j)>0$, $R_j(\beta_j)=0$, take a circle
instead through 0, $R_j (\alpha_j)e^{i\alpha_j}$, tangent to the
line of direction $\beta_j$. Analogously if $R_j(\alpha_j)=0$,
$R_j(\beta_j)>0$.
\item[(iii)] If $R_j(\alpha_j)=R_j(\beta_j)=0$, take $R_j(\theta_j)=0$.
\end{enumerate}

\end{lema}

\begin{proof}
If $\{ \alpha_j, \beta_j\}$ are in the cases (i) or (ii), let $[
0,a_j]$ be the diameter of the circle. If $\{ \alpha_j,
\beta_j\}$ are in the case (iii), take $a_j=0$. Apply \PL\ Theorem~\ref{phragmenlindelofvv}
to the function
$$
g(\bz) = f(\bz )\cdot \exp \left( \frac{a_1}{z_1}+\cdots
+\frac{a_n}{z_n}\right)
$$
to obtain the result.
\end{proof}

From this result, we deduce Watson's Lemma in several variables
for functions with null \sase\ following a multidirection.
\begin{lema}
Let $S= S(\balpha, \bbeta; \brho)$ be a polysector, $f\in \OO (S)\cap {\mathcal C} \big(
\prod_{j=1}^n (\overline{S}_j\setminus \{ 0\})\big)$ and bounded in $\prod_{j=1}^n\big(\overline{S}_j\setminus\{0\}\big)$. Suppose that for some $j_0$,
$\beta_{j_0}-\alpha_{j_0}\geq \pi$, and $f$ is exponentially flat
of type $\bR= (R_1,\ldots ,R_n)\in[0,\infty)^n$ in multidirection
$\btheta_0$, with $R_{j_0}>0$. Then, $f$ identically vanishes.
\end{lema}

\begin{proof}
Fix $\bz$ with $\arg
(\bz)=\btheta_0=(\theta_1,\ldots,\theta_n)$, and consider $f_{\bz, j_0} :
S(\alpha_{j_0}, \beta_{j_0}; \rho_{j_0})\rightarrow \C$ defined
as $f_{\bz, j_0}(z)= f(\bz_{j\,'}, z)$. Applying the one variable version of Proposition~\ref{propoexpoplana}, we see that $f_{\bz, j_0}$ has null \sase\ following $\theta_{j_0}$, of Gevrey order 1. By Lemma 5 of \cite{Fruzha},
$f_{\bz,j_0}\equiv 0$. As $\bz$ was arbitrarily chosen, we deduce that $f\equiv 0$ on $\arg
(\bz)= \btheta_0$. Classical results of complex variables show
that, then, $f\equiv 0$ (see, for instance, \cite[p.\ 31]{Chabat}, where
the multidirection $\btheta_0$ is real).
\end{proof}

Combining previous results, we obtain:

\begin{prop}\label{58}
Let $S= S(\balpha, \bbeta; \brho)$ and $f\in \OO
(S)$, asymptotically bounded in $S$ and having null \sase\ $\boldsymbol{1}$-Gevrey of
type $\bR= (R_1,\ldots , R_n)\in(0,\infty)^n$ following a
multidirection $\btheta_0$ of $S$. Then,  $f$ has null \sase\
1-Gevrey in $S$.
\end{prop}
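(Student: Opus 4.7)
The plan is to combine Proposition~\ref{propoexpoplana} with Lemma~\ref{exponencial}: first translate the hypothesis into exponential flatness of type $\bR$ on $\btheta_0$, then propagate this via a Phragm\'en--Lindel\"of argument to a \emph{uniform} exponentially flat bound of positive type on every $T\prec S$, and finally read Proposition~\ref{propoexpoplana} in reverse to recover a $\boldsymbol{1}$-Gevrey strong asymptotic expansion on $T$.

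\smallskip

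To begin, I would invoke the implication $(i)\Rightarrow(ii)$ of Proposition~\ref{propoexpoplana}, so that $f$ is exponentially flat of type $\bR$ following $\btheta_0$. Fix an arbitrary $T\prec S$. After a preliminary subdivision of $S$ into finitely many sub-polysectors of angular opening strictly less than $\pi$ in every coordinate (needed for Lemma~\ref{exponencial}), I may assume that $T$ lies in one such piece. Then, for each coordinate $j$, either $\theta_{0j}$ sits in the interior of the opening of $T_j$, in which case I split $T_j$ at $\theta_{0j}$, or $\theta_{0j}$ lies outside it, in which case I enlarge $T_j$ angularly---keeping $\prec S$, which is possible since $\alpha_j<\theta_{0j}<\beta_j$---so that $\theta_{0j}$ becomes an endpoint of $T_j$. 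This yields finitely many sub-polysectors $T^{(k)}\prec S$ whose union contains $T$, each of opening less than $\pi$ in every variable and each possessing $\btheta_0$ as one of its $2^n$ corner multidirections.

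\smallskip

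On every such $T^{(k)}$, $f$ is asymptotically bounded, exponentially flat of type $\bR\in(0,\infty)^n$ on the corner $\btheta_0$, and merely bounded---i.e.\ exponentially flat of type $\boldsymbol{0}$ in the sense of Remark~\ref{tipo0}---on the remaining $2^n-1$ corner multidirections. Lemma~\ref{exponencial} therefore applies and furnishes a positive type in every multidirection of $T^{(k)}$. The main obstacle I expect is that the statement of Lemma~\ref{exponencial} is per multidirection, whereas I need uniformity over $T^{(k)}$; uniformity is however already built into its proof, which applies Theorem~\ref{phragmenlindelofvv} to the function $g(\bz)=f(\bz)\exp\bigl(\sum_{j=1}^{n} a_je^{i\phi_j}/z_j\bigr)$, where $a_je^{i\phi_j}$ parametrizes the diameter through $0$ of the $j$-th circle. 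Phragm\'en--Lindel\"of produces a single bound $|g(\bz)|\le M$ throughout $T^{(k)}$, which rewrites as
\[
|f(\bz)|\le M\exp\Bigl(-\sum_{j=1}^{n}\frac{a_j\cos(\phi_j-\arg z_j)}{|z_j|}\Bigr)\qquad(\bz\in T^{(k)}).
\]
On any $T^{(k),*}\prec T^{(k)}$ each cosine is uniformly bounded below by a positive constant (the geometric content of cases (i)--(ii) of Lemma~\ref{exponencial}), so $f$ is uniformly exponentially flat on $T^{(k),*}$ with positive type. Shrinking so that the $T^{(k),*}$ still cover $T$, and then repeating pointwise the Stirling-type computation used in the proof of $(ii)\Rightarrow(i)$ of Proposition~\ref{propoexpoplana}, we obtain the required uniform bound $|f(\bz)|\le c\,\bA^{\bN}\bN!\,|\bz|^{\bN}$ on $T$ which, together with the vanishing of $\mathrm{TA}(f)$ granted by the hypothesis, is precisely the null $\boldsymbol{1}$-Gevrey strong asymptotic expansion of $f$ on $T$.
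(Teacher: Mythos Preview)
Your overall strategy---convert the hypothesis via Proposition~\ref{propoexpoplana} into exponential flatness along $\btheta_0$, propagate by the Phragm\'en--Lindel\"of mechanism behind Lemma~\ref{exponencial}, and then convert back---is exactly what the paper intends (its proof reads ``Combining previous results''), and your remarks on uniformity are well taken. Two points deserve attention, one minor and one substantial.

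\smallskip
\emph{Minor.} Your ``preliminary subdivision of $S$ into pieces of opening $<\pi$'' does not do what you want: after the subdivision $\btheta_0$ may lie in a piece different from the one containing $T$, and your subsequent enlargement of $T_j$ \emph{within $S_j$} can then again produce a factor of opening $\ge\pi$. The paper handles wide factors separately via the Watson-type Lemma~5.7, which forces $f\equiv 0$; you should invoke that instead.

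\smallskip
\emph{Substantial.} For $n\ge 2$ your appeal to Lemma~\ref{exponencial} does not go through as stated. That lemma does not accept an arbitrary list of types on the $2^n$ corners: the corner types must have the product form $\bR(\boeta)=(R_1(\eta_1),\dots,R_n(\eta_n))$, determined by the $2n$ numbers $R_j(\alpha_j),R_j(\beta_j)$. With your choice $R_j(\theta_{0j})=R_j>0$ and $R_j(\text{other edge})=0$, the hypothesis on a \emph{mixed} corner such as $(\theta_{01},\beta_2',\dots,\beta_n')$ is exponential flatness of type $(R_1,0,\dots,0)$, i.e.\ $|f(\bz)|\le M e^{-R_1/|z_1|}$ there---mere boundedness is not enough. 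Concretely, in your Phragm\'en--Lindel\"of step for $g(\bz)=f(\bz)\exp\big(\sum_j a_je^{i\phi_j}/z_j\big)$ one gets, on that corner,
\[
|g(\bz)|=|f(\bz)|\,e^{R_1/|z_1|},
\]
which you cannot bound by a constant from what is assumed. (For $n=1$ there are no mixed corners, so the one-variable argument in~\cite{Fruzha} is unaffected.) One clean repair is to iterate one-variable Phragm\'en--Lindel\"of together with a two-constants interpolation: after the first pass in $z_n$ (with $z_1,\dots,z_{n-1}$ fixed on $\theta_{01},\dots,\theta_{0,n-1}$) one obtains, on $(\theta_{01},\dots,\theta_{0,n-1},\theta_n)$, a bound of the form $|f|\le M\exp\big(-\mu(\theta_n)\sum_{j<n}R_j/|z_j|-R_n(\theta_n)/|z_n|\big)$ with $\mu(\theta_n)>0$ away from the far edge; one then repeats in $z_{n-1}$, etc., arriving at a uniformly positive type on any $T^\ast\prec T^{(k)}$. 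With this adjustment your proof is complete and matches the paper's route.
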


\begin{obse}\label{59}
In the previous Proposition, let us assume that
$\beta_j-\alpha_j<\pi$ for every $j$. As indicated in~\cite{Fruzha}, the type of the \ase\ can be computed
using Lemma \ref{exponencial}, particularly case 2. Geometric
considerations show that for a multidirection $\btheta =
(\theta_1,\ldots ,\theta_n)$ on $S$, the type is $\bR(\btheta)= (R_1(\theta_1), \ldots ,R_n (\theta_n))$, where
$$
R_j(\theta_j)=\begin{cases}
R_j \cdot \frac{\sin (\theta_j-\beta_j)}{\sin
(\theta_{0j}-\beta_j)} & \text{ if }\theta_j\in [\theta_{0j},
\beta_j], \\\noalign{\vskip.5em}
R_j \cdot \frac{\sin (\theta_j-\alpha_j)}{\sin
(\theta_{0j}-\alpha_j)} & \text{ if }\theta_j\in
[\alpha_j,\theta_{0j}].
\end{cases}
$$
\end{obse}

Let us now establish a Borel--Ritt--Gevrey type theorem in several
variables. This result is presented in \cite{Haraoka}. As we need
to control not only the Gevrey order, but also the type of the expansion,
we will repeat here the main steps of the proof.

\begin{defi}\label{defiserie1GtipoR}
Let $\hat{f}= \sum\limits_{\bN\in \N_0^n} f_{\bN}\bz^{\bN}\in \C
[[ \bz]]$, $\bR= (R_1,\ldots ,R_n)\in (0,\infty )^n$. We will
say that $\hat{f}$ is a ${\mathbf 1}$-Gevrey series of type $\bR$
if for every $\delta>0$, there exists $C=C(\delta)>0$ such that
$$
\norm{f_{\bN}}\leq C \cdot \bN !\cdot \prod_{j=1}^n \left(
\frac{1}{R_j}+\delta
\right)^{N_j}.
$$
\end{defi}

\begin{obse}\label{obseFA}
For a function $f$ admitting $\boldsymbol{1}$-Gevrey \sase\ in a polysector $S$, consider the formal series $\textrm{FA}(f):=\sum_{\bN\in\N_0^n}f_{\bN}\bz^{\bN}$, whose coefficients are the constant elements of $\textrm{TA}(f)$ given by
$$
f_{\bN}=\lim_{\scriptstyle \bz\to \textbf{0}\atop \scriptstyle \bz\in T\prec S}\frac{D^{\bN}f(\bz)}{\bN!}.
$$
Then, one has that $\textrm{FA}(f)$ is a ${\mathbf 1}$-Gevrey series. The next paragraph and theorem show us a way to go in the opposite direction.
\end{obse}

For a ${\mathbf 1}$-Gevrey series $\hat{f}= \sum\limits_{\bN\in \N_0^n} f_{\bN}\bz^{\bN}$ of type $\bR$, define its ${\mathbf 1}$-Borel
transform as $\varphi (\bz)=\sum\limits_{\bN\in \N_0^n}
\frac{1}{\bN !} f_{\bN} \bz^{\bN}$, convergent in $D({\mathbf
0}, \bR) $. Take $\bz_0= (z_{01}, \ldots z_{0n})\in D({\mathbf
0}, \bR)$, and define the truncated Laplace transform as
\begin{equation} \label{BL}
F(\bz)=\mathcal{L}_{\bz_0}^T(\varphi)(\bz)=\frac{1}{z_1\cdots z_n}\cdot \int_{0}^{z_{01}}\cdots
\int_0^{z_{0n}} \varphi (\bt )\cdot \exp \left(
-\frac{t_1}{z_1}-\ldots -\frac{t_n}{z_n}\right) dt_1\cdots dt_n.
\end{equation}
$F(\bz)$ is holomorphic on $S_0= \mathop{\prod}\limits_{j=1}^n
S(\arg (z_{0j}) - \frac{\pi}{2}, \arg (z_{0j})+\frac{\pi}{2};\infty)$.

A total family of \sase\ may be defined from the series $\hat{f}$ as
follows. Let $\emptyset\neq J\subseteq {\mathcal N}$. Write
$$
\hat{f}(\bz) = \sum_{\balpha_J\in \N_0^J} f_{\balpha_J}
(\bz_{J'})\cdot \bz_J^{\balpha_J},
$$
where $f_{\balpha_J} (\bz_{J'})= \sum\limits_{\balpha_{J'}\in\N_0^{J'}}
f_{(\balpha_J, \balpha_{J'})} \bz_{J'}^{\balpha_{J'}}$ are
${\mathbf
1}_{J'}$-Gevrey series. Define
$$
\varphi_{\balpha_{J}}(\bz_{J'})= \sum_{\balpha_{J'}\in \N_0^{J'}}
\frac{1}{\balpha_{J'}!} \cdot f_{(\balpha_{J}, \balpha_{J'})}\cdot
\bz_{J'}^{\balpha_{J'}},
$$
and
$$
F_{\balpha_{J}}(\bz_{J'})=\frac{1}{\bz_{J'}}\cdot \left(
\prod_{j\in J'} \int_0^{z_{0j}}\right) \varphi_{\balpha_J }
(\bt_{J'})\exp \left( -\sum_{j\in J'} \frac{t_j}{z_j}\right)\cdot
\prod _{j\in J'} dt_j
$$
(where, for short, the product symbol before the integrals denotes an iterated integral). Consider ${\mathcal F}=
{\mathcal F}(\hat{f}) = \{ F_{\balpha_{J}}(\bz_{J'}); \emptyset
\neq J \subseteq \mathcal{N},\ \balpha_J \in\N_0^{J}\}$.

\begin{teor}[Borel-Ritt-Gevrey] \label{BRG}
With previous notations, $F(\bz)$ has the family ${\mathcal
F}(\hat{f})$ as total family of ${\mathbf 1}$-Gevrey \sase\ in
$S_0$. Moreover, given a multidirection~$\btheta$ in~$S_0$, the type
of this \sase\ is $\bR (\btheta) = (R_1(\theta_1), \ldots
,R_n(\theta_n))$, where $R_j(\theta_j)= \norm{z_{0j}}\cdot \cos
(\theta_j-\arg (z_{0j}))$.
\end{teor}

\begin{proof}

We shall only compute the type of the \sase\ on every multidirection, referring to~\cite{Haraoka} for a more detailed account on some of the forthcoming computations. Indeed, given $\bN\in \N_0^n$, we depart from the following expression in~\cite[p.\ 376]{Haraoka}:

\begin{multline*}
F(\bz)-\hbox{App}_{\bN}({\mathcal F})(\bz) \\
=\frac{1}{|\bz|^{\boldsymbol{1}}} \sum_{\emptyset\neq J\subseteq\mathcal{N}}(-1)^{\# J}
\sum_{\balpha_J<\bN_J} \sum_{\balpha_{J'}\geq \bN_{J'}}
\frac{f_{(\balpha_{J},\balpha_{J'})}}{\balpha_{J}!\balpha_{J'}!} \left( \prod_{j\in J}
\int_{z_{0j}}^{\infty} t_j^{\alpha_j} e^{-\frac{t_j}{z_j}}d
t_j \right) \left( \prod_{j\in J'}\int_{0}^{z_{0j}} t_j^{\alpha_j}
e^{-\frac{t_j}{z_j}}dt_j \right).
\end{multline*}

Given $\delta>0$, $\exists\, C=C(\delta)>0$ such that, for every
$\balpha=(\a_1,\ldots,\a_n) \in \N_0^n$,
$$\norm{f_{\balpha}}\leq C\balpha !
\prod_{j=1}^n  \left( \frac{1}{R_j}+\delta\right)^{\alpha_j}.$$
So, we can estimate
\begin{multline*}
\norm{F(\bz)-\hbox{App}_{\bN}({\mathcal F})(\bz)} \\
\leq C\cdot\frac{1}{|\bz|^{\boldsymbol{1}}} \sum_{\emptyset\neq J\subseteq {\mathcal
N}} \sum_{\balpha_J<N_{J}} \sum_{\balpha_{J'}\geq N_{J'}}
\prod_{j\in J}
\norm{\int_{z_{0j}}^{\infty }
\left(\left(\frac{1}{R_j}+\delta\right)
t_j\right)^{\alpha_j} e^{-\frac{t_j}{z_j}} dt_j}  \\ \cdot
\norm{\prod_{j\in J'} \int_{0}^{z_{0j}} \left(
\left(\frac{1}{R_j}+\delta\right) t_j\right)^{\alpha_j}
e^{-\frac{t_j}{z_j}}dt_j}
\end{multline*}
\begin{multline*}
\leq  C\frac{1}{|\bz|^{\boldsymbol{1}}} \sum_{\emptyset\neq J\subseteq {\mathcal
N}} \sum_{\balpha_{J}<N_J} \sum_{\balpha_{J'}\geq N_{J'}}
\prod_{j\in J} \left( \frac{1}{R_j}+\delta\right)^{\alpha_j}
\norm{z_{0j}}^{\alpha_j+1} \\  \cdot \norm{\int_{1}^{\infty }
s_j^{N_j}e^{-s_j\cdot \frac{z_{0j}}{z_j}} ds_j} \cdot
\prod_{j\in J'} \left( \frac{1}{R_j}+\delta\right)^{\alpha_j}
\norm{z_{0j}}^{\alpha_j+1} \cdot \norm{\int_0^1 s_j^{N_j}
e^{-s_j \frac{z_{0j}}{z_j}} ds_j}
\end{multline*}
$$\leq  C\frac{1}{|\bz|^{\boldsymbol{1}}} \sum_{\emptyset\neq J\subseteq {\mathcal
N}} \sum_{\balpha_{J}<N_J} \sum_{\balpha_{J'}\geq N_{J'}}
\frac{1}{\cos (\btheta - \arg(\bz_0))^{\bN+{\mathbf 1}}} \cdot
\frac{\bN !\cdot \norm{\bz}^{\bN +{\mathbf
1}}}{\norm{\bz_0}^{\bN +{\mathbf 1}}} \cdot  \prod_{j=1}^n
\left( \frac{1}{R_j}+\delta\right)^{\alpha_j}
\norm{z_{0j}}^{\alpha_j+1}.
$$

As $\norm{z_{0j}}<R_j$, taking $\delta$ small enough, we obtain a
bound
$$
D\cdot \bN ! \cdot \frac{1}{\bR(\btheta )^{\bN}} \cdot
\norm{\bz}^{\bN},
$$
for certain $D=D(\delta)>0$, as desired.
\end{proof}

\begin{obse}
Regarding the previous result, it makes no difference to consider series, resp.\ functions, with coefficients, resp.\ values, in a complex Banach space, instead of limiting our construction to complex series and functions.
\end{obse}

In the sequel, we will need to consider vector spaces of
holomorphic functions in a polysector admitting ${\boldsymbol{1}}$-Gevrey \ase\ with a
type depending on the multidirection. In order to guarantee that these spaces have a suitable Banach space structure, their definition will involve estimates for the derivatives.

\begin{defi}\label{defW1RSE}
Let $E$ be a complex Banach space, $S=S(\balpha,\bbeta;\brho)\subseteq \C^n$ a polysector,
and, for every $j=1,\dots,n$, let $R_j:(\alpha_j,\beta_j)\to(0,\infty)$ be a function such that for every compact subset $K\subset(\alpha_j,\beta_j)$ one has that \begin{equation}\label{condicionRj}
\inf_{\theta\in K}R_j(\theta)>0.
\end{equation}
For every  multidirection $\btheta=(\theta_1,\ldots,\theta_n)$ in $S$ we write
$$
\bR(\btheta)=\big(R_1(\theta_1),\ldots,R_n(\theta_n)\big).
$$
We will denote by
$\mathcal{W}_{\bR(\btheta)}^{1} (S,E)$
the space of holomorphic functions from $S$ to $E$ verifying
\begin{equation}\label{normaW1RSE}
\norma{f}_{\bR(\btheta)}:= \sup \{ \frac{\norma{D^{\bN}f
(\bz)
} \bR(\btheta(\bzeta))^{\bN} }{\bN !^2} : \btheta(\bz)=\arg (\bz),\ \bz\in
S,\ \bN\in \N_0^n
\} <+\infty .
\end{equation}
\end{defi}

\begin{prop}
The pair
$\left( \mathcal{W}_{\bR(\btheta)}^{1} (S,E),
\norma{\cdot}_{\bR(\btheta)}
\right)$
is a Banach space.
\end{prop}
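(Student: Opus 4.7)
The plan is to verify first that $\|\cdot\|_{\bR(\btheta)}$ is indeed a norm on $\mathcal{W}_{\bR(\btheta)}^{1}(S,E)$, and then to establish completeness by a standard argument of uniform convergence on compact subpolysectors, the crucial input being the hypothesis~(\ref{condicionRj}).

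The norm axioms are immediate: absolute homogeneity and the triangle inequality follow from taking suprema, and positive definiteness uses that the $\bN=\mathbf{0}$ term of the sup yields $\|f(\bz)\|_E\le\|f\|_{\bR(\btheta)}$ for every $\bz\in S$, so $\|f\|_{\bR(\btheta)}=0$ forces $f\equiv 0$.

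For completeness, let $(f_k)_k$ be a Cauchy sequence. Fix any bounded subpolysector $T\prec S$. Since the projection of $T$ onto arguments is a product of compact subsets $K_j\subset(\alpha_j,\beta_j)$, hypothesis~(\ref{condicionRj}) provides a constant $c_T>0$ such that $R_j(\theta_j)\ge c_T$ for every $\theta_j\in K_j$, $j=1,\dots,n$. Hence, for each multi-index $\bN\in\N_0^n$ and every $\bz\in T$,
\begin{equation*}
\|D^{\bN}f_k(\bz)-D^{\bN}f_l(\bz)\|_E\le\frac{\bN!^2}{c_T^{|\bN|}}\,\|f_k-f_l\|_{\bR(\btheta)}.
\end{equation*}
Thus $(D^{\bN}f_k)_k$ is uniformly Cauchy on $T$ for every $\bN$, in particular $(f_k)_k$ converges uniformly on each compact subpolysector to some function $f:S\to E$. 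By the Weierstrass theorem in several complex variables (applied componentwise or via the Banach-valued version), $f\in\OO(S,E)$ and $D^{\bN}f_k\to D^{\bN}f$ uniformly on compact subsets of $S$ for every $\bN$.

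It remains to check that $f\in\mathcal{W}_{\bR(\btheta)}^{1}(S,E)$ and that $f_k\to f$ in norm. Given $\varepsilon>0$, choose $k_0$ with $\|f_k-f_l\|_{\bR(\btheta)}\le\varepsilon$ for all $k,l\ge k_0$. Then for every $\bz\in S$ and every $\bN\in\N_0^n$,
\begin{equation*}
\frac{\|D^{\bN}f_k(\bz)-D^{\bN}f_l(\bz)\|_E\,\bR(\btheta(\bz))^{\bN}}{\bN!^2}\le\varepsilon.
\end{equation*}
Letting $l\to\infty$ and using the pointwise convergence $D^{\bN}f_l(\bz)\to D^{\bN}f(\bz)$ established above, we obtain
\begin{equation*}
\frac{\|D^{\bN}f_k(\bz)-D^{\bN}f(\bz)\|_E\,\bR(\btheta(\bz))^{\bN}}{\bN!^2}\le\varepsilon\qquad\text{for all }k\ge k_0,\;\bz\in S,\;\bN\in\N_0^n.
\end{equation*}
Taking the supremum, $\|f_k-f\|_{\bR(\btheta)}\le\varepsilon$ for $k\ge k_0$, which shows convergence in norm. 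In particular, $\|f\|_{\bR(\btheta)}\le\|f_{k_0}\|_{\bR(\btheta)}+\varepsilon<\infty$, so $f\in\mathcal{W}_{\bR(\btheta)}^{1}(S,E)$, completing the proof.

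The only substantive point is the passage from a Cauchy condition involving the argument-dependent weights $\bR(\btheta(\bz))^{\bN}$ to ordinary uniform convergence on compact subpolysectors; this is precisely where the local positivity condition~(\ref{condicionRj}) on the functions $R_j$ is needed, guaranteeing that weights cannot degenerate on a compact $T\prec S$.
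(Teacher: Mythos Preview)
Your proof is correct and follows essentially the same approach as the paper's own proof, which is only a one-sentence sketch: use condition~(\ref{condicionRj}) to obtain uniform convergence on compact subsets, deduce holomorphy of the limit, and conclude it lies in the space. You have simply filled in the details of this outline.
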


\begin{proof}
Thanks to the conditions (\ref{condicionRj}) imposed on the functions $R_j$, it is easy to prove that a Cauchy sequence in $\mathcal{W}_{\bR(\btheta)}^{1} (S,E)$ converges uniformly on the compact subsets of $S$ to an \textit{a fortiori\/} holomorphic function from $S$ to $E$, which will ultimately belong to $\mathcal{W}_{\bR(\btheta)}^{1} (S,E)$.
\end{proof}

\begin{obses}\label{obseW1RSE}
\linea
\begin{enumerate}
\item[(i)] Suppose that for $j=1,\ldots,n$, we are given functions $\widetilde{R}_j:(\alpha_j,\beta_j)\to(0,\infty)$ verifying (\ref{condicionRj}) and $\widetilde{R}_j(\theta_j)\leq R_j(\theta_j)$ for every $\theta_j\in(\alpha_j,\beta_j)$. Put $\widetilde{\bR}=(\widetilde{R}_1,\ldots,\widetilde{R}_n)$. Then, the inclusion ${\mathcal
W}^1_{\bR(\btheta)} (S,E)\subseteq {\mathcal W}^1_{\widetilde{\bR}(\btheta)}
(S,E) $ is continuous.
\item[(ii)] As follows from Proposition 3 in \cite{Haraoka}, given a function $f$ with ${\boldsymbol{1}}$-Gevrey \sase\ in $S$
(as stated in Definition~\ref{defsase}), for every $T\prec S$ there exists a constant vector $\bA\in(0,\infty)^n$ such that $f\in{\mathcal W}^1_{\bA}(T,E)$.\par
Conversely, every $f\in {\mathcal W}^1_{\bR(\btheta)}(S,E)$ admits ${\boldsymbol{1}}$-Gevrey
\sase\ in~$S$. Indeed, we may associate to $f$ a coherent family
${\mathcal F}$ by the expressions in~(\ref{familiatotal}), and for $\bN=(N_1,\ldots,N_n)\in \N_0^n$ we have that
\begin{multline*}
f(\bz) - \mbox{App}_{\bN} ({\mathcal F})(\bz) \\
=\prod_{\sst j=1\atop\sst N_j\neq 0}^n\left(
\int_0^{z_j}\,dt_{j,1}\int_0^{t_{j,1}}\,dt_{j,2}
\cdots\int_0^{t_{j,N_j-1}}\,dt_{j,N_j}\right)
D^{\bN} f(t_{1,N_1},t_{2,N_2},\ldots,t_{n,N_n})
\end{multline*}
for every $\bz=(z_1,\ldots,z_n)\in S$, where $t_{k,0}=z_k$ and the product symbol stands for an iterated integral (see \cite{Haraoka,jesusisla}).
In order to conclude, it suffices to estimate this integral on every $T\prec S$ by using~(\ref{normaW1RSE}) and~(\ref{condicionRj}). In this way, one also gets that every $f\in {\mathcal W}^1_{\bR(\btheta)}(S,E)$ admits ${\boldsymbol{1}}$-Gevrey \sase\ in every multidirection $\btheta$ in~$S$ with type $\bR(\btheta)$.
\end{enumerate}
\end{obses}

Our next aim is to show that the function studied in Theorem~\ref{BRG} belongs to a suitable space of this kind.

\begin{prop}\label{transLaplaW1Rtilde}
Let $E$ be a complex Banach space and $\hat{f}= \sum\limits_{\bN\in \N_0^n} f_{\bN}\bz^{\bN}\in E[[ \bz]]$ be a ${\mathbf 1}$-Gevrey series of type $\bR= (R_1,\ldots ,R_n)\in (0,\infty )^n$. Then, the function $F$ constructed in Theorem~\ref{BRG} belongs to ${\mathcal W}^1_{\widetilde{\bR}(\btheta)}(S_0,E)$,
where the function
$$
\widetilde{\bR}(\btheta)=(\widetilde{R}_1(\theta_1),\ldots,\widetilde{R}_n(\theta_n))
$$
is such that for every multidirection~$\btheta=(\theta_1,\ldots,\theta_n)$ in~$S_0$,
$$
\frac{1}{4.7}\norm{z_{0j}}\cos^2
(\theta_j-\arg (z_{0j}))\le \widetilde{R}_j(\theta_j)\le \frac{1}{2}\norm{z_{0j}} \cos^2
(\theta_j-\arg (z_{0j})),\quad j=1,\ldots,n.
$$
\end{prop}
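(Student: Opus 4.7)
The plan is to apply the multivariable Cauchy integral formula to the function $F-\textrm{App}_{\bN}\mathcal{F}$, combined with the refined asymptotic estimate furnished by Theorem~\ref{BRG}. The key point is that $D^{\bN}(\textrm{App}_{\bN}\mathcal{F})\equiv 0$: each summand of $\textrm{App}_{\bN}\mathcal{F}(\bz)$ is of the form $(-1)^{\#J+1}f_{\bH_J}(\bz_{J'})\bz_J^{\bH_J}$ with $\emptyset\neq J\subset\mathcal{N}$ and $\bH_J<\bN_J$ strictly, so for each $j\in J$ the operator $\partial^{N_j}/\partial z_j^{N_j}$ annihilates $z_j^{H_j}$ and kills the entire summand. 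Therefore $D^{\bN}F(\bz)=D^{\bN}(F-\textrm{App}_{\bN}\mathcal{F})(\bz)$, and Cauchy's formula on the distinguished boundary of a polydisc $D(\bz,\brho)$ yields
\begin{equation*}
\|D^{\bN}F(\bz)\|\le \frac{\bN!}{\brho^{\bN}}\sup_{\bw\in\partial_d D(\bz,\brho)}\|(F-\textrm{App}_{\bN}\mathcal{F})(\bw)\|.
\end{equation*}

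Fix $\bz\in S_0$ with $\arg\bz=\btheta$, set $\omega_j:=\arg z_{0j}$, $c_j:=\cos(\theta_j-\omega_j)>0$, $s_j:=|\sin(\theta_j-\omega_j)|$, and choose $\rho_j:=\tfrac{1}{2}c_j|z_j|$. A direct geometric check shows that $D(\bz,\brho)\subset S_0$ compactly and that, for $\bw\in\overline{D(\bz,\brho)}$, one has $|w_j|\le(1+c_j/2)|z_j|$ and $\cos(\arg w_j-\omega_j)\ge c_j^{*}:=c_j\bigl(\sqrt{1-c_j^2/4}-s_j/2\bigr)$. Inspecting the proof of Theorem~\ref{BRG}, for any sufficiently small $\delta>0$ one finds a constant $D_\delta>0$ (uniform for $\bw$ in the polydisc) such that
\begin{equation*}
\|(F-\textrm{App}_{\bN}\mathcal{F})(\bw)\|\le D_\delta\,\bN!\prod_{j=1}^n\Big(\frac{|w_j|}{|z_{0j}|c_j^{*}}\Big)^{N_j};
\end{equation*}
substitution into the Cauchy estimate together with $\rho_j=c_j|z_j|/2$ produces
\begin{equation*}
\|D^{\bN}F(\bz)\|\le D_\delta\,\bN!^{2}\prod_{j=1}^n\Big(\frac{2+c_j}{c_j^{2}|z_{0j}|\bigl(\sqrt{1-c_j^2/4}-s_j/2\bigr)}\Big)^{N_j}.
\end{equation*}

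To finish, define $\widetilde{R}_j(\theta_j):=|z_{0j}|c_j^{2}/4.7$; the upper bound $\widetilde{R}_j\le\tfrac{1}{2}|z_{0j}|c_j^{2}$ is immediate, and the continuity of $\widetilde{R}_j$ on $(\omega_j-\pi/2,\omega_j+\pi/2)$ guarantees (\ref{condicionRj}). The finiteness of the norm defining $\mathcal{W}^{1}_{\widetilde{\bR}(\btheta)}(S_0,E)$ follows as soon as each factor $\widetilde{R}_j(2+c_j)/\bigl[c_j^{2}|z_{0j}|(\sqrt{1-c_j^2/4}-s_j/2)\bigr]$ is at most $1$, which reduces to the elementary inequality
\begin{equation*}
4.7\bigl(\sqrt{1-c^{2}/4}-\tfrac{1}{2}\sqrt{1-c^{2}}\bigr)\ge 2+c,\qquad c\in(0,1],
\end{equation*}
readily checked by a one-variable calculus argument (the tightest value occurs near $|\theta_j-\omega_j|=\pi/3$). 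The main technical obstacle is precisely this numerical sharpening: with a polyradius of the form $\rho_j=\lambda_j|z_j|$ and $\lambda_j$ independent of $c_j$, the resulting bound degenerates as $c_j\to 0$, so the scaling $\rho_j\sim c_j|z_j|$ is essential to create the square $c_j^{2}$ in the effective type. A secondary subtlety is the small loss in type introduced by the factor $D_\delta$, which is harmlessly absorbed by choosing $\delta$ small so that the strict inequality is preserved uniformly.
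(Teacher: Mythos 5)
Your proof follows essentially the same route as the paper's: reduce the derivative bound to a Cauchy estimate on a polydisc whose polyradius is scaled by $\cos(\theta_j-\arg z_{0j})$, feed in the pointwise bound from Theorem~\ref{BRG}, and finish with a numerical inequality yielding the constant $4.7$. The geometric bound $\cos(\arg w_j-\omega_j)\ge c_j\bigl(\sqrt{1-c_j^2/4}-\tfrac{1}{2}\sqrt{1-c_j^2}\bigr)$ on the distinguished boundary is exactly the paper's computation at $c=1/2$, and your inequality $4.7\bigl(\sqrt{1-c^2/4}-\tfrac{1}{2}\sqrt{1-c^2}\bigr)\ge 2+c$ is precisely the statement that the paper's function $g$ satisfies $g\ge 1/4.7$.

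The one genuine divergence is that you fix the polyradius scale $c=1/2$ from the start and simply \emph{define} $\widetilde R_j(\theta_j):=|z_{0j}|\cos^2(\theta_j-\arg z_{0j})/4.7$, whereas the paper keeps the scale $c\in(0,1)$ as a free parameter, optimizes it, and thereby defines $\widetilde R_j(\theta_j)=|z_{0j}|\cos^2(\theta_j-\arg z_{0j})\,g(\cos(\theta_j-\arg z_{0j}))$ via the supremum $g(\delta)=\sup_{c\in(0,1)}\frac{c(\sqrt{1-c^2\delta^2}-c\sqrt{1-\delta^2})}{1+c\delta}$, and only afterwards sandwiches $g$ between $1/4.7$ and $1/2$. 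Your choice is slightly simpler and perfectly adequate for proving the proposition as stated (which only requires \emph{some} function in the stated two-sided range). However, bear in mind that the explicit function $g$ reappears in the statement of Theorem~\ref{teorfinal}(ii) through $\gamma=g(\delta(0))$ and the factor $g(\delta(\theta_j))$ in $\overline R_j(\theta_j)$; if one only has your cruder $\widetilde R_j=|z_{0j}|\cos^2(\cdot)/4.7$, the resulting type in the main theorem would be a bit smaller. So the optimization over $c$ is not idle in the broader development, even though it is inessential for this proposition alone.
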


\begin{proof}
It makes no difference to take $E=\C$.
Put $\bzeta_0=(z_{01},\ldots,z_{0n})$, $\arg(\bz_0)=(\theta_{01},\ldots,\theta_{0n})$.
For every multidirection $\btheta=(\theta_1,\ldots,\theta_n)$ in $S_0$ we write $\bdelta(\btheta)=\cos(\btheta-\arg(\bzeta_0))$. Given $\bz=(z_1,\ldots,z_n)\in S_0$ with argument $\btheta$, and  $c\in(0,1)$, one easily checks that the closed polydisc centered at $\bzeta$ with polyradius
$$
c\bdelta(\btheta)|\bzeta|:=(c\cos(\theta_1-\theta_{01})|z_1|,\ldots, c\cos(\theta_n-\theta_{0n})|z_n|)
$$
is contained in $S_0$. Let $\bN=(N_1,\ldots,N_n)\in\n_0^n$.
Since $D^{\bN}\textrm{App}_{\bN}(\textrm{TA}(F))\equiv 0$ on $S_0$,
Cauchy's integral formula allows us to write
$$
D^{\bN}F(\bzeta)=
\frac{\bN!}{(2\pi i)^n}
\int_{\ds\partial_d D(\bzeta,c\bdelta(\btheta)|\bzeta|)}
\frac{F(\bomega)-\textrm{App}_{\bN}(\textrm{TA}(F))(\bomega)}
{(\bomega- \bzeta)^{\bN+{\bf 1}}}\,d \bomega,
$$
where $\partial_d$ stands for the distinguished boundary.
According to Theorem~\ref{BRG}, there exists
$D>0$ such that
$$
\big|F(\bomega)-\textrm{App}_{\bN}(\textrm{TA}(F))(\bomega)|\le
D\cdot \bN ! \cdot \frac{1}{\bR(\bomega)^{\bN}} \cdot
\norm{\bomega}^{\bN},\quad\bomega=(\omega_1,\ldots,\omega_n)\in S_0,
$$
where $\bR (\bomega) = (\norm{z_{01}}\cos(\arg(\omega_1)-\theta_{01}),\ldots,
\norm{z_{0n}}\cos(\arg(\omega_n)-\theta_{0n}))$. Hence,
\begin{multline*}
|D^{\bN}F(\bzeta)|\le D\bN!^2\frac{(\boldsymbol{1}+c\bdelta(\btheta))^{\bN}}{(c\bdelta(\btheta))^{\bN}} \max_{\bomega\in \partial_d D(\bzeta,c\bdelta(\btheta)|\bzeta|)} \frac{1}{\bR(\bomega)^{\bN}}\\
\le D\bN!^2\prod_{j=1}^n\Big(\frac{1+c\cos(\theta_j-\theta_{0j})}{c\cos(\theta_j-\theta_{0j})} \max_{|\omega_j-z_j|=c\cos(\theta_j-\theta_{0j})|z_j|} \frac{1}{\norm{z_{0j}}\cos(\arg(\omega_j)-\theta_{0j})}\Big)^{N_j}.
\end{multline*}
Now, it is clear that we may reason componentwise, so we assume $n=1$ and drop the subindices. Put $\delta(\theta)=\cos(\theta-\theta_0)$. One finds from elementary considerations that
\begin{multline*}
\max_{|\omega-z|=c\delta(\theta) |z|} \frac{1}{\norm{z_{0}}\cos(\arg(\omega)-\theta_{0})}
=
\frac{1}{|z_0|\cos\big(|\theta-\theta_0|+\arcsin(c\delta(\theta))\big)}\\
=\frac{1}{|z_0|\delta(\theta) \big(\sqrt{1-c^2\delta(\theta)^2}-c\sqrt{1-\delta(\theta)^2}\,\big)}.
\end{multline*}
Since $c\in(0,1)$ was arbitrary, one obtains that
$$
|D^NF(z)|\le DN!^2\Big(\frac{1}{\norm{z_{0}} \delta(\theta)^2}
\inf_{c\in(0,1)}\frac{1+c\delta(\theta)}{c\big(\sqrt{1-c^2\delta(\theta)^2}- c\sqrt{1-\delta(\theta)^2}\,\big)}\Big)^N,
$$
what means that $F\in{\mathcal W}^1_{\widetilde{R}(\theta)}(S_0)$ for
\begin{equation}\label{tildeRg}
\widetilde{R}(\theta)=\norm{z_{0}} \delta(\theta)^2\sup_{c\in(0,1)}\frac{c\big(\sqrt{1-c^2\delta(\theta)^2}- c\sqrt{1-\delta(\theta)^2}\,\big)}{1+c\delta(\theta)}=\norm{z_{0}} \delta(\theta)^2g(\delta(\theta)).
\end{equation}
Finally, we estimate the value of $g(\delta(\theta))$ (in particular, we deduce that $\widetilde{R}$ verifies~(\ref{condicionRj})). On one hand, observe that the numerator and denominator in the last quotient increase with $\delta(\theta)\in(0,1]$, so that
$$
g(\delta(\theta))\le \sup_{c\in(0,1)}c\sqrt{1-c^2}=\frac{1}{2}.
$$
On the other hand, taking $c=1/2$ and with the help of calculus software we obtain that
$$
g(\delta(\theta))\ge \frac{2\sqrt{1-\delta(\theta)^2/4}- \sqrt{1-\delta(\theta)^2}}{4+2\delta(\theta)}\ge \frac{1}{4.7}.
$$
\end{proof}


The following result can be essentially found in
\cite{jesusisla,javi2}, and we omit its eminently technical proof.
It will be useful when reducing our problem in several variables to a one-variable situation.

\begin{lema} \label{lemab}
Let $S=S_1\times S_2$ be a product of two polysectors, and let $\bR_1,\bR_2$ be functions as those described in Definition~\ref{defW1RSE}, respectively acting on the multidirections $\btheta_1$ on $S_1$ and $\btheta_2$ on $S_2$. Then:
\begin{enumerate}
\item[(i)]  The map
$$\Psi_1:\mathcal{W}_{\bR_1(\btheta_{1})}^{1}(S_{1},
\mathcal{W}_{\bR_{2}(\btheta_{2})}^{1}(S_{2}))\to
\mathcal{W}_{(\bR_{1}(\btheta_{1}),\bR_{2}(\btheta_{2}))}^{1}(S_{1}\times S_{2})$$
defined by $\Psi_1(f)(\bz_{1},\bz_{2})=f(\bz_{1})(\bz_{2})$ is well
defined and it is an isomorphism.
Moreover, for each $n,m\in\N_{0}$ we have
$$D^{(n,m)}\Psi_1(f)(\bz_1,\bz_2)=D^{m}(D^{n}f(\bz_1))(\bz_2),\qquad
(\bz_{1},\bz_{2})\in S_1\times S_2.$$
\item[(ii)] The map
 $$\Psi_2:\mathcal{W}_{\bR_{1}(\btheta_{1})}^{1}(S_{1},
\mathcal{W}_{\bR_{2}(\btheta_{2})}^{1}(S_{2}))\to\mathcal{W}_{\bR_{2}(
\btheta_{2})}^{1}(S_{2},\mathcal{W}_{\bR_{1}(\btheta_{1})}^{1}(S_{1}))$$
defined by
 $\Psi_2(f)(\bz_{2})(\bz_{1})=f(\bz_{1})(\bz_{2})$ is an isomorphism.
Moreover, for each
$n,m\in\N_{0}$ we have that
$$D^{n}(D^{m}\Psi_2(f)(\bz_2))(\bz_1)=
D^{m}(D^{n}f(\bz_1))(\bz_2),\qquad (\bz_{1},\bz_{2})\in S_1\times S_2.$$
\end{enumerate}
\end{lema}

In order to state our main result, we need a definition.

\begin{defi}\label{deffamiliaprimerorden}
Let ${\mathcal F}= \left\{ f_{\bN_J}: \emptyset \neq
J\subseteq {\mathcal N},\ \bN_J \in \N_0^J\right\}$ be a coherent
family on a polysector $S=S_1\times \cdots \times S_n$. The
subfamily ${\mathcal F}_1$ consisting of the
elements of ${\mathcal F}$ that depend on $n-1$ variables is called the first order family associated to ${\mathcal F}$. More precisely,
$$
{\mathcal F}_1 = \left\{ f_{m_{\{ j \} }} (\bz_{{j\,}'}): \ j\in {\mathcal N},\ m\in \N_0\right\},
$$
where, according to (\ref{familiatotal}),
$$
f_{m_{\{ j \} }} (\bz_{{j\,}'})=\lim_{z_j\to 0}\frac{D^{m\be_j}f(z_j,\bz_{{j\,}'})}{m!}.
$$
For simplicity, we will write $f_{jm}$ instead of $f_{m_{\{ j \} }}$. Note that ${\mathcal F}_1$ consists of the $n$ sequences $\{f_{jm}\}_{m\in\N_0}$, $j=1,\ldots,n$.

Given $f$ admitting \sase\ in $S$, $\textrm{TA}_1(f)$ will denote the first order
family associated to $\textrm{TA}(f)$. $\textrm{TA}_1(f)$ inherits coherence conditions from $\textrm{TA}(f)$ (see Definition~\ref{deficohe}). We say a first order family ${\mathcal F}_1$ is coherent if it verifies such conditions.
\end{defi}

\begin{obse}
There is a one-to-one correspondence between coherent families and coherent first order families (see~\cite{jesusisla,javifelix}).
\end{obse}

\begin{ejem}
If $n=2$, a coherent family is
$${\mathcal F}=
\{ f_{1n}(z_2),\  f_{2m}
(z_1),\  f_{nm}:\ n,m\in\N_0\} ,$$
where the coherence conditions mean that the \ase\ of $f_{1n}(z_2)$ is $\sum_{m} f_{nm} z_2^m$, and the \ase\ of
$f_{2m} (z_1) $ is $\sum_{n} f_{nm} z_1^n$, for every $n,m\in\N_0$. The first order associated family is, loosely speaking,
$${\mathcal F}_1=
\{ f_{1n}(z_2)\}_{n\in\N_0}\cup \{ f_{2m}(z_1)\}_{m\in\N_0}.
$$
\end{ejem}

Let us now introduce some new spaces needed in the sequel.

\begin{defi}
Let $E$ be a Banach space, and $\ba= (a_{\bN})_{\bN\in \N_0^n}
$ a multisequence in $E$. Fix $\bA\in (0,\infty )^n$. We will say
that $\ba\in \Gamma^1_{\bA} (\N_0^n, E)$ if
$$
\norm{\ba}_{\bA}:=\sup \left\{
\frac{\norma{a_{\bN}}_E\bA^{\bN}}{\bN !}: \bN\in
\N_0^{n}\right\} <+\infty.
$$
The pair $\left( \Gamma^1_{\bA}(\N_0^n, E),
\norm{\cdot}_{\bA}\right)$ is a Banach space.
\end{defi}

\begin{obses}
\linea
\begin{enumerate}
\item[(i)] As before, given $\bA,\bB\in(0,\infty)^n$ with $\bA\leq \bB$, we have that the inclusion $\Gamma^1_{\bB}
(\N_0^n, E)\subseteq \Gamma^1_{\bA} (\N_0^n, E)$ is continuous.
\item[(ii)] Given $f\in\mathcal{W}_{\bR(\btheta)}^{1} (S,E)$, it is clear (see Remark~\ref{obseFA}) that $\textrm{FA}(f)\in\Gamma^1_{\bR(\btheta)} (\N_0^n, E)$ for every multidirection $\btheta$ in $S$.
\item[(iii)] If $\ba= (a_{\bN})_{\bN\in \N_0^n}\in \Gamma^1_{\bA} (\N_0^n, E)$, then
$\hat{f}=\sum_{\bN\in\N_0^n}a_{\bN}\bz^{\bN}$ is a $\boldsymbol{1}$-Gevrey series of type~ $\bA$.
\end{enumerate}
\end{obses}

We are ready to state the following result, which tells us which are the natural conditions to depart from in an interpolation problem in the spaces $\mathcal{W}_{\bR(\btheta)}^{1}(S)$. Its proof follows directly from the definitions of the different spaces involved.

\begin{lema}\label{lemafamiprimorde}
Let $S=\prod_{j=1}^n S_j$ be a polysector, and suppose $f\in {\mathcal W}_{\bR(\btheta)}^1(S)$ for some function
$$
\bR(\btheta)=(R_1(\theta_1),\ldots,R_n(\theta_n)).
$$
Let
$$\textrm{TA}_1(f)=
\{ f_{jm}(\bz_{{j\,}'}): j\in\mathcal{N},\ m\in\N_0\}
$$
be the first order family of \sase\ of $f$ (see Remarks~\ref{obseW1RSE}). Then we have:
\begin{enumerate}
\item[(i)] For $j=1,\ldots,n$ and $m\in\N_0$, $f_{jm}\in{\mathcal W}_{\bR_{{j\,}'}(\btheta_{{j\,}'})}^1 (S_{{j\,}'})$.
\item[(ii)] For $j=1,\ldots,n$, $\{ f_{jm}\}_{m\in \N_0}\in \Gamma^1_{R_j(\theta_j)} (\N_0,
{\mathcal W}_{\bR_{{j\,}'}(\btheta_{{j\,}'})}^1 (S_{{j\,}'}))$ for every direction $\theta_j$ on $S_j$.
\end{enumerate}
\end{lema}

\begin{ejem}
Suppose $n=2$ and $f\in {\mathcal W}_{\bR(\btheta)}^1(S)$, where
$S=S_1\times S_2$, $\bR=(R_1,R_2)$. Let
$$
\textrm{TA}_1(f)=\{ f_{1n}(z_2),\ f_{2m}(z_1):n,m\in\N_0\}.
$$
By the previous Lemma, $\{ f_{1n}\}_{n\in \N_0}\in \Gamma^1_{R_1(\theta_1)} (\N_0,
{\mathcal W}^1_{R_2(\theta_2)} (S_2))$ for every direction $\theta_1$ on $S_1$, and
$\{ f_{2m}\}_{m\in \N_0}\in \Gamma^1_{R_2(\theta_2)} (\N_0,
{\mathcal W}^1_{R_1(\theta_1)} (S_1))$ for every direction $\theta_2$ on $S_2$.
\end{ejem}

The next interpolation Lemma is essentially stated in
\cite[Thm.\ 3.4]{javi2}, though that result did not contain the specific estimates we are going to provide now.

\begin{lema}\label{521}
Let $S=S(\balpha,\bbeta;\infty)=\prod_{j=1}^nS_j\subseteq \C^n$ be a polysector such that, for every $j$, the opening of $S_j$, $\beta_j-\alpha_j$, is less than or equal to $\pi$.
For $j=1,\dots,n$, let $R_j:(\alpha_j,\beta_j)\to(0,\infty)$ be a function under the conditions in Definition~\ref{defW1RSE}, and define $\bR=(R_1,\ldots,R_n)$ as usual.
Suppose given a coherent first order family ${\mathcal F}_1=\{f_{jm}:j=1,\dots,n,\ m\in\N_0\}$ such that for every $j=1,\ldots,n$ and every direction $\theta_j$ on $S_j$,
$$
\{ f_{jm}\}_{m\in \N_0}\in \Gamma^1_{R_j(\theta_j)} (\N_0,
{\mathcal W}_{\bR_{{j\,}'}(\btheta_{{j\,}'})}^1 (S_{{j\,}'})).
$$
For every $j=1,\ldots,n$, choose $z_{0j}$ in the bisecting direction $\theta_{0j}$ of $S_j$ and such that $|z_{0j}|<\sup_{\theta_j\in(\alpha_j,\beta_j)}R_j(\theta_j)$,
and consider the functions $\widetilde{R}_j\colon(\alpha_j,\beta_j)\to(0,\infty)$ introduced in Proposition~\ref{transLaplaW1Rtilde}. If we put
\begin{equation}\label{hatR}
\widehat{R}_j=\min(R_j,\widetilde{R}_j),\qquad j=1,\ldots,n,
\end{equation}
and $\widehat{\bR}=(\widehat{R}_1,\ldots,\widehat{R}_n)$,
then there exists $f\in{\mathcal W}_{\widehat{\bR}(\btheta)}^1 (S)$
such that $\textrm{TA}_1(f)=\mathcal{F}_1$.
\end{lema}

\begin{proof}
For simplicity of the notations, the proof will be carried on for the case $n=2$, the general case being treated in the same way. So, we depart from
a coherent first order family
$$
\mathcal{F}_1=\{ f_{1n}(z_2),\ f_{2m}(z_1):n,m\in\N_0\},
$$
with $\{ f_{1n}\}_{n\in \N_0}\in \Gamma^1_{R_1(\theta_1)} (\N_0,
{\mathcal W}^1_{R_2(\theta_2)} (S_2))$ for every direction $\theta_1$ on $S_1$, and
\begin{equation}\label{f2m}
\{ f_{2m}\}_{m\in \N_0}\in \Gamma^1_{R_2(\theta_2)} (\N_0,
{\mathcal W}^1_{R_1(\theta_1)} (S_1))
\end{equation}
for every direction $\theta_2$ on $S_2$. The series $\varphi_1(z_1)=\!\sum_{n\in\N_0}f_{1n}z_1^n/n!$, with coefficients in ${\mathcal W}^1_{R_2(\theta_2)} (S_2)$, converges. For $z_{01}$ as stated, put $S_{01}=S(\theta_{01}-\pi/2,\theta_{01}+\pi/2;\infty)\supset S_1$. We may consider the function $H_1^{[1]}:=\mathcal{L}_{z_{01}}^T(\varphi_1)$ which, by Proposition~\ref{transLaplaW1Rtilde}, belongs to
${\mathcal W}^1_{\widetilde{R}_1(\theta_1)} (S_{01},{\mathcal W}^1_{R_2(\theta_2)} (S_2))$ and admits the series $\sum_{n\in\N_0}f_{1n}z_1^n$ as its asymptotic expansion in $S_{01}$. According to Lemma~\ref{lemab}, the function $H^{[1]}:=\Psi_1(H_1^{[1]})$ belongs to
$${\mathcal W}^1_{(\widetilde{R}_1(\theta_1),R_2(\theta_2))} (S_{01}\times S_2)\subset {\mathcal W}^1_{(\widetilde{R}_1(\theta_1),R_2(\theta_2))} (S_1\times S_2).$$
Put $\textrm{TA}_1(H^{[1]})=\{h_{1n}^{[1]},h_{2m}^{[1]}:n,m\in\N_0\}$.
From Lemma~\ref{lemab} we also get that $h_{1n}^{[1]}=f_{1n}$ for every $n\in\N_0$, while Lemma~\ref{lemafamiprimorde} implies that
$\{h_{2m}^{[1]}\}_{m\in\N_0}\in \Gamma^1_{R_2(\theta_2)} (\N_0,
{\mathcal W}^1_{\widetilde{R}_1(\theta_1)} (S_1))$ for every direction $\theta_2$ on $S_2$.
So, taking into account (\ref{f2m}), (\ref{hatR}) and the first item in Remarks~\ref{obseW1RSE}, we deduce that
$\{f_{2m}-h_{2m}^{[1]}\}_{m\in\N_0}\in \Gamma^1_{R_2(\theta_2)} (\N_0,
{\mathcal W}^1_{\widehat{R}_1(\theta_1)} (S_1))$ for every direction $\theta_2$ on $S_2$.
Hence, the series $\varphi_2(z_2)=\sum_{m\in\N_0}(f_{2m}-h_{2m}^{[1]})z_2^m/m!$, with coefficients in ${\mathcal W}^1_{\widehat{R}_1(\theta_1)} (S_1)$, converges. Take $z_{02}$ as specified, and $S_{02}=S(\theta_{02}-\pi/2,\theta_{02}+\pi/2;\infty)\supset S_2$. We  consider the function $H_2^{[2]}:=\mathcal{L}_{z_{02}}^T(\varphi_2)$ which, again by Proposition~\ref{transLaplaW1Rtilde}, belongs to
${\mathcal W}^1_{\widetilde{R}_2(\theta_2)} (S_{02},{\mathcal W}^1_{\widehat{R}_1(\theta_1)} (S_1))$ and admits the series $\sum_{m\in\N_0}(f_{2m}-h_{2m}^{[1]})z_2^m$ as its asymptotic expansion in $S_{02}$. By Lemma~\ref{lemab}, the function $H^{[2]}:=\Psi_1(H_2^{[2]})$ belongs to
$${\mathcal W}^1_{(\widehat{R}_1(\theta_1),\widetilde{R}_2(\theta_2))} (S_{1}\times S_{02})\subset {\mathcal W}^1_{(\widehat{R}_1(\theta_1),\widetilde{R}_2(\theta_2))} (S_1\times S_2).$$
Write $\textrm{TA}_1(H^{[2]})=\{h_{1n}^{[2]},h_{2m}^{[2]}:n,m\in\N_0\}$.
Lemma~\ref{lemab} tells us that $h_{2m}^{[2]}=f_{2m}-h_{2m}^{[1]}$ for every $m\in\N_0$,
and Lemma 4.3 in~\cite{javi2} allows one to conclude that $h_{1n}^{[2]}\equiv 0$ for every $n\in\N_0$. So, the function $f:=H^{[1]}+H^{[2]}$ solves the problem, since
$f\in{\mathcal W}_{\widehat{\bR}(\btheta)}^1 (S)$ and, by linearity,
$\textrm{TA}_1(f)=\mathcal{F}_1$.
\end{proof}

We can now state and prove the main result of this paper. The
statement generalizes the one variable case (Theorem 1 in
\cite{Fruzha}).

\begin{teor}\label{teorfinal}
Let $S=S(\balpha, \bbeta; \brho )=\prod_{j=1}^n S_j$ be a polysector, $f\in \OO(S)$ asymptotically bounded in $S$, and ${\mathcal F}$ a coherent family in $S$ with first order subfamily ${\mathcal F}_1=\{f_{jm}:j\in\mathcal{N},\ m\in\N_0\}$.

\begin{enumerate}
\item[(i)] If $f$ has \sase\ following a multidirection $\btheta_0$ in $S$, given by the family ${\mathcal F}$, then $f$ has ${\mathcal F}$ as its family of \sase\ in $S$.
\item[(ii)] Suppose $\btheta_0-\frac{\pi}{2} \cdot
\boldsymbol{1} <\balpha<\btheta_0<\bbeta<\btheta_0+\frac{\pi}{2} \cdot
\boldsymbol{1}$, and let $f$ have ${\mathcal F}$ as family of
$\boldsymbol{1}$-Gevrey \sase\ following $\btheta_0=(\theta_{01},\ldots,\theta_{0n})$, with type $\bR_0(\btheta_0
)= (R_{01},\ldots ,R_{0n})$. For $j=1,\dots,n$, let $R_j:(\alpha_j,\beta_j)\to(0,\infty)$ be a function as in Definition~\ref{defW1RSE}, and suppose that for every $j=1,\ldots,n$ and every direction $\theta_j$ on $S_j$,
$$
\{ f_{jm}\}_{m\in \N_0}\in \Gamma^1_{R_j(\theta_j)} (\N_0,
{\mathcal W}_{\bR_{{j\,}'}(\btheta_{{j\,}'})}^1 (S_{{j\,}'})).
$$
Then, $f$ admits $\boldsymbol{1}$-Gevrey \sase\ in $S$, with type $\overline{\bR}(\btheta)$ in multidirection $\btheta$, where
$\overline{\bR}(\btheta) = (\overline{R}_1(\theta_{1}),\ldots ,\overline{R}_n(\theta_{n}))$
is defined as follows:

Let $\gamma:=g(\delta(0))=0.30028\ldots$, where $g$ is defined in~(\ref{tildeRg}). Put, for every $j=1,\dots,n$, $\gamma_j:=\sup_{\theta_j\in(\alpha_j,\beta_j)}R_j(\theta_j)$, $t_j(\theta_{0j}):=\min(R_{0j},\gamma_j\cdot \gamma,R_j(\theta_{0j}))$.
Then, we define
$$
\overline{R}_j(\theta_j)=\begin{cases}
    \min\big(t_j(\theta_{0j})\frac{\sin(\theta_j-\a_j)}{\sin(\theta_{0j}-\a_j)},
    R_j(\theta_j),\gamma_j\cos^2(\theta_j-\theta_{0j})g(\delta(\theta_j))\big) &\text{
if }\theta_j\in(\a_j,\theta_{0j}], \\\noalign{\vskip.5em}
%
	 \min\big(t_j(\theta_{0j})\frac{\sin(\theta_j-\b_j)}{\sin(\theta_{0j}-\b_j)},
    R_j(\theta_j),\gamma_j\cos^2(\theta_j-\theta_{0j})g(\delta(\theta_j))\big)
     &\text{
if }\theta_j\in(\theta_{0j},\b_j].
	\end{cases}
$$
\end{enumerate}
\end{teor}

\begin{proof}
(i) Borel-Ritt's theorem for \sases\ has been obtained in different ways, see~\cite{jesusisla,Majima2,javifelix}. It asserts the existence of a function $f_0\in\OO(S)$ admitting $\mathcal{F}$ as its family of \sase\ in $S$. So, the function $h:=f-f_0\in\OO(S)$ is asymptotically bounded in $S$ and admits null \sase\ following $\btheta_0$.
By Proposition~\ref{danulo}, $h$ has null \sase\ in $S$, and so $f=h+f_0$ admits \sase\ in $S$ given by the family $\mathcal{F}$.

(ii) Clearly, $\btheta_0$ is the bisecting direction of a polysector of opening $\pi\cdot\boldsymbol{1}$ and containing~$S$. Choose $\bzeta_0=(z_{01},\ldots,z_{0n})$ in the multidirection $\btheta_0$ such that $|z_{0j}|<\gamma_j$ for $j=1,\ldots,n$. From the hypotheses and from Lemma \ref{521}, there exists a function $F\in{\mathcal W}_{\widehat{\bR}(\btheta)}^1 (S)$
such that $\textrm{TA}_1(F)=\mathcal{F}_1$, where $\widehat{R}_j=\min(R_j,\widetilde{R}_j)$, $j=1,\ldots,n$, $\widetilde{\bR}$ being the function introduced in Proposition~\ref{transLaplaW1Rtilde}. In particular, by the second item in Remarks~\ref{obseW1RSE}, $F$ has $\boldsymbol{1}$-Gevrey \sase\ in $S$, of type $\widehat{\bR}(\btheta)$ in every multidirection $\btheta$. Then, the function $h:=f-F$ has null \sase\ following $\btheta_0$, of type
$$
\big(\min(R_{01},\widehat{R}_1(\theta_{01})),\ldots, \min(R_{0n},\widehat{R}_n(\theta_{0n}))\big).
$$
Note that for $j\in\mathcal{N}$,
$$
\widehat{R}_j(\theta_{0j})= \min(R_j(\theta_{0j}),\widetilde{R}_j(\theta_{0j}))=
\min(R_j(\theta_{0j}),|z_{0j}|\gamma).
$$
Using Proposition~\ref{58}, we deduce that $h$ has null \sase\ in $S$, and Remark~\ref{59} gives the type $\bR'(\btheta)$ of this expansion in every multidirection $\btheta=(\theta_1,\ldots,\theta_n)$ as:
$$
{R}'_j(\theta_j)=\begin{cases}
    \min(R_{0j},R_j(\theta_{0j}),|z_{0j}|\gamma) \frac{\sin(\theta_j-\a_j)}{\sin(\theta_{0j}-\a_j)} &\text{
if }\theta_j\in(\a_j,\theta_{0j}], \\\noalign{\vskip.5em}
\min(R_{0j},R_j(\theta_{0j}),|z_{0j}|\gamma) \frac{\sin(\theta_j-\b_j)}{\sin(\theta_{0j}-\b_j)}
     &\text{
if }\theta_j\in(\theta_{0j},\b_j].
	\end{cases}
$$
Finally, $f=F+h$ will admit $\boldsymbol{1}$-Gevrey \sase\ in $S$ of type
$\overline{\bR}(\btheta)$ in multidirection $\btheta$, where
$\overline{\bR}(\btheta) = (\overline{R}_1(\theta_{1}),\ldots ,\overline{R}_n(\theta_{n}))$ is obtained as
$$
\overline{R}_j(\theta_{j})=\min(\widehat{R}_j(\theta_j),{R}'_j(\theta_j)),\qquad j\in\mathcal{N}.
$$
In order to conclude, it suffices to make $|z_{0j}|$ tend to $\gamma_j$ for every $j$.

\end{proof}

\begin{obses}\linea
\begin{enumerate}
\item[(i)] One can treat the case of Gevrey \sase\ in wide polysectors, i.e., without the restriction $\btheta_0-\frac{\pi}{2} \cdot
\boldsymbol{1} <\balpha<\btheta_0<\bbeta<\btheta_0+\frac{\pi}{2} \cdot
\boldsymbol{1}$, with a technique similar to the one employed in~\cite{Fruzha}. However, the result becomes hard to state and we do not think it worthy to go into details.
\item[(ii)] All the results in this paper can be easily generalized for $\bk$-Gevrey \sases, for some $\bk=(k_1,\ldots,k_n)\in(0,\infty)^n$, as defined by Haraoka~\cite{Haraoka}.
\end{enumerate}
\end{obses}

\end{document}